\documentclass{siamart171218}



\usepackage{lipsum}
\usepackage{amsfonts}
\usepackage{graphicx}
\usepackage{epstopdf}
\usepackage{algorithmic}
\ifpdf
  \DeclareGraphicsExtensions{.eps,.pdf,.png,.jpg}
\else
  \DeclareGraphicsExtensions{.eps}
\fi


\newsiamremark{remark}{Remark}
\newsiamremark{hypothesis}{Hypothesis}
\crefname{hypothesis}{Hypothesis}{Hypotheses}
\newsiamthm{claim}{Claim}
\newsiamremark{fact}{Fact}
\crefname{fact}{Fact}{Facts}

\usepackage{graphicx}
\usepackage{xcolor}
\usepackage{amsmath,amsfonts}
\usepackage{algorithm}
\usepackage{algorithmic}
\usepackage{xcolor}
\usepackage{newfloat}
\usepackage{listings}
\usepackage{makecell}

\newcommand{\mL}{\mathcal{L}}

\newcommand{\mG}{\mathcal{G}}
\newcommand{\mN}{\mathcal{N}}
\newcommand{\mW}{\mathcal{W}_2}
\newcommand{\mI}{\text{I}}
\newcommand{\bx}{\boldsymbol{x}}
\newcommand{\by}{\boldsymbol{y}}
\newcommand{\beps}{\boldsymbol{\varepsilon}}
\newcommand{\rd}{\mathrm{d}}

\newcommand{\mTl}{\mathcal{T}^l}
\newcommand{\mTt}{\tilde{\mathcal{T}}^h}
\newcommand{\mS}{\mathcal{S}}

\newcommand{\bu}{\boldsymbol{u}}
\newcommand{\bv}{\boldsymbol{v}}

\newcommand{\tu}{\tilde{u}^h}
\newcommand{\utu}{\underline{\tilde{u}}^h}
\newcommand{\uul}{\underline{u}^l}

\newcommand{\difft}{\frac{\rd}{\rd t}}
\newcommand{\partialt}{\frac{\partial}{\partial t}}

\newcommand{\datal}{\{ u^l_i \}_{i=1}^N}
\newcommand{\dataltest}{\{ u^l_i \}_{i=1}^Q}
\newcommand{\datah}{\{ u^h_j \}_{j=1}^M}
\newcommand{\datahtest}{\{ u^h_i \}_{j=1}^Q}
\newcommand{\datatest}{\{ u^l_q, u^h_q \}_{q=1}^Q}

\newcommand{\datath}{\{ \tilde{u}^h_j \}_{j=1}^M}
\newcommand{\datathl}{\{ \tilde{u}^h_{j,128} \}_{j=1}^M}
\newcommand{\datathll}{\{ \tilde{u}^h_{j,64} \}_{j=1}^M}

\newcommand{\pdatath}{\{ \tilde{u}^h_{j,64}, \tilde{u}^h_j \}_{j=1}^M}
\newcommand{\pdatathl}{\{ u^h_j, \tilde{u}^h_{j,128} \}_{j=1}^M}
\newcommand{\pdatathll}{\{ \tilde{u}^h_{j,128}, \tilde{u}^h_{j,64} \}_{j=1}^M}

\newcommand{\datahatl}{\{ \hat{u}^{*,l}_i \}_{i=1}^Q}
\newcommand{\datahath}{\{ \hat{u}^h_i \}_{i=1}^Q}

\newcommand{\edatal}{\{ \bu^l_i \}_{i=1}^{N'}}
\newcommand{\edatah}{\{ \bu^h_j \}_{j=1}^{M'}}
\newcommand{\edataltest}{\{ \bu^l_i \}_{i=1}^{Q'}}
\newcommand{\edatahtest}{\{ \bu^h_i \}_{i=1}^{Q'}}
\newcommand{\edatath}{\{ \tilde{\bu}^h_j \}_{j=1}^{M'}}


\headers{Diffusion-based Models for Unpaired SR}{W. Xu, Y. Lu, L. Shen, A. Xuan and A. Barzegari}

\title{Diffusion-Based Models for Unpaired Super-Resoltuon in Fluid Dynamics\thanks{Submitted to the editors DATE.
\funding{YL thanks the support from the National Science Foundation through the award
DMS-2436333 and the support from the Data Science Initiative at the University of
Minnesota through a MnDRIVE DSI Seed Grant.  LS thanks the support from the Office of Naval Research and the University of Minnesota.}}}

\author{Wuzhe Xu\thanks{Department of Mathematics and Statistics, University of Massachusetts Amherst, Amherst, MA 01003, USA (\email{wuzhexu@umass.edu}).}
\and Yulong Lu\thanks{School of Mathematics, University of Minnesota, Minneapolis, MN 55455, USA
  (\email{yulonglu@umn.edu}).} \and Lian Shen\thanks{Department of Mechanical Engineering and Saint Anthony Falls Laboratory, University of Minnesota, Minneapolis, MN 55455, USA (\email{shen@umn.edu}).} \and Anqing Xuan\thanks{Department of Mechanical Engineering and Saint Anthony Falls Laboratory, University of Minnesota, Minneapolis, MN 55455, USA (\email{xuanx004@umn.edu})}. \and Ali Barzegari\thanks{Department of Mechanical Engineering and Saint Anthony Falls Laboratory, University of Minnesota, Minneapolis, MN 55455, USA (\email{barze011@umn.edu}).}}

\usepackage{amsopn}

\makeatletter
\newcommand*{\addFileDependency}[1]{
  \typeout{(#1)}
  \@addtofilelist{#1}
  \IfFileExists{#1}{}{\typeout{No file #1.}}
}
\makeatother


\ifpdf
\hypersetup{
  pdftitle={An Example Article},
  pdfauthor={D. Doe, P. T. Frank, and J. E. Smith}
}
\fi




\begin{document}

\maketitle

\begin{abstract}
High-fidelity, high-resolution numerical simulations are crucial for studying complex multiscale phenomena in fluid dynamics, such as turbulent flows and ocean waves. However, direct numerical simulations with high-resolution solvers are computationally prohibitive. As an alternative, super-resolution techniques enable the enhancement of low-fidelity, low-resolution simulations. However,  traditional super-resolution approaches rely on paired low-fidelity, low-resolution and high-fidelity, high-resolution datasets for training, which are often impossible to acquire in complex flow systems. To address this challenge, we propose a novel two-step approach that eliminates the need for paired datasets. First, we perform unpaired domain translation at the low-resolution level using an Enhanced Denoising Diffusion Implicit Bridge. This process transforms low-fidelity, low-resolution inputs into high-fidelity, low-resolution outputs, and we provide a theoretical analysis to highlight the advantages of this enhanced diffusion-based approach. Second, we employ the cascaded Super-Resolution via Repeated Refinement model to upscale the high-fidelity, low-resolution prediction to the high-resolution result. We demonstrate the effectiveness of our approach across three fluid dynamics problems. Moreover, by incorporating a neural operator to learn system dynamics, our method can be extended to improve evolutionary simulations of low-fidelity, low-resolution data. 
\end{abstract}

\begin{keywords}
super-resolution, downscaling, diffusion models, fluid dynamics, unpaired domain translation
\end{keywords}

\begin{AMS}
65C60, 65M22, 65M50, 68T07, 76F55
\end{AMS}

\section{Introduction}
Simulating high-fidelity and high-resolution (HFHR) data is of great importance in many scientific problems. However, obtaining HFHR data via direct numerical simulations (DNS) is computationally expensive and thus often impractical for large-scale or long-term simulations. Super-resolution (SR) has emerged as a computationally efficient, data-driven surrogate strategy to generate HFHR outputs from low-fidelity, low-resolution (LFLR) simulations. While deep learning models, notably convolutional neural networks (CNNs) \cite{o2015introduction,sun2020downscaling} and generative adversarial networks (GANs) \cite{goodfellow2020generative,stengel2020adversarial}, have demonstrated success in image-based SR, their adaptation to scientific simulations faces two critical limitations. First, many existing SR models rely on paired LFLR-HFHR data for supervised training. Yet in practice, acquiring paired data is often impossible. For example, in chaotic systems, trajectories starting from two  close initial conditions can diverge rapidly. Consequently, simulation data from a high resolution (HR) solver and the corresponding data from a low resolution (LR) solver, both starting from the same initial condition at different resolutions, may deviate significantly over time, making it impossible to construct a paired dataset. Second, even if paired data were available, LFLR simulations cannot be treated as low-resolution approximations of HFHR data. This is because the LR solvers often fail to capture small-scale dynamics, which can accumulate and eventually influence the large-scale dynamics. Consequently, LFLR simulations exhibit systematic biases distinct from high-fidelity, low-resolution (HFLR) data, which refers to the downsampled version of HFHR data.

These challenges motivate the first central question of this paper:

\textit{Given two unpaired datasets of LFLR simulation data (from an LR solver) and HFHR simulation data (from an HR solver), how can we effectively enhance the fidelity of the LFLR simulation data?}

In addition to the time-independent scenarios, many scientific applications, such as turbulent fluid flows and ocean wave modeling, require enhancing evolutionary simulations where temporal consistency is critical. In these systems, errors in LFLR simulations corrupt instantaneous results and propagate over time, destabilizing long-term trajectories. More importantly, as discussed earlier, in a chaotic system, it is impossible for the LR solver to produce meaningful solution trajectories because a small variation in the initial condition will lead to a completely different solution at later times. This motivates our second central question:

\textit{Given two unpaired trajectory datasets of LFLR simulations (from an LR solver) and HFHR simulations (from an HR solver), how can we enhance the fidelity of the LFLR trajectory simulations?}

In \cref{sec:problem_setting}, we mathematically formulate the two central questions and introduce the necessary notations. We then provide a high-level overview of our approach and conclude with a summary of the main contributions of this work. The detailed methodology is presented in \cref{sec:method}. In \cref{sec:related}, we provide a detailed literature review and highlight the advantages of our approach compared to existing methods.

\subsection{Problem description and methodology overview}\label{sec:problem_setting}

The first central question this paper proposes to study is to construct a data-driven enhancement approach to translate LFLR \textbf{snapshot data} into HFHR data using two unpaired datasets. Formally, we let $u^h$ denote HFHR data generated by an HR solver, and $u^l$ denote LFLR data generated by an LR solver. Consequently, this question can be formulated as an unpaired domain translation problem between two empirical distributions $\{ u^l \}$ and $\{ u^h \}$. Since $u^l$ and $u^h$ have different resolutions, we introduce a restriction operator $\mathcal{R}$ that downsamples the HFHR data to obtain its high-fidelity, low-resolution (HFLR) counterpart, defined as $\tilde{u}^h:=\mathcal{R} u^h$. This allows us to generate a paired dataset $\{ \tilde{u}^h, u^h \}$, where $\tilde{u}^h$ serves as an intermediate representation that bridges the domain gap. Specifically, rather than directly learning a mapping from $u^l$ to $u^h$, we decompose the problem into two sub-tasks:
\begin{itemize}
    \item \textbf{Debiasing}: Learn a transformation $\mathcal{T}$ that maps the LFLR $u^l$ into its HFLR counterpart $\tilde{u}^h$ using unpaired datasets $\{ u^l \}$ and $\{ \tilde{u}^h \}$. This step bridges the fidelity gap between the biased LFLR data and the target HFLR data.
    \item \textbf{Super-Resolution (SR)}: Learn an SR model $\mS$ to reconstruct $u^h$ from $\tilde{u}^h$ using paired dataset $\{ \tilde{u}^h, u^h \}$. This step bridges the resolution gap between the HFLR data and the HFHR data.
\end{itemize}
The diagram is presented in \cref{fig:diag}. As illustrated in the figure, recovering the HFHR data involves two critical considerations during the debiasing step. First, the large-scale structure of the data must be preserved. Second, the translation process should generate the desired fine-scale details at the low-resolution level. To address these challenges, we propose an enhanced version of Diffusion Domain Interpolation Bridge (DDIB) \cite{su2022dual} that robustly transforms LFLR data $u^l$ into HFLR data $\tilde{u}^h$, simultaneously preserving the large-scale structure and generating the desired fine-scale details. For the SR step, which is inherently a pseudo-inverse problem, we treat it as a conditional sampling problem and implement a cascaded Super-Resolution via Repeated Refinement (SR3) \cite{saharia2022image}. Detailed descriptions of our methods are provided in \cref{sec:method}.

\begin{figure}[ht]
    \centering
    \includegraphics[width=0.8\textwidth]{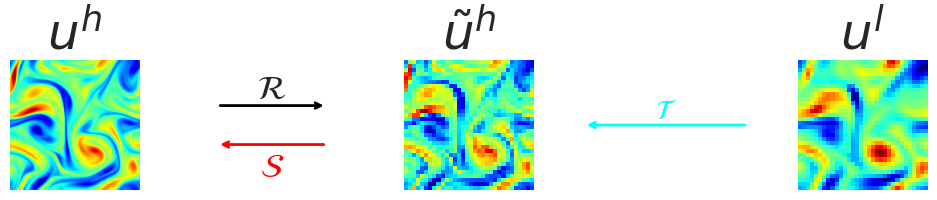}
    \caption{Relationship among HFHR (left), HFLR (middle), and LFLR (right) data in time-snapshot super-resolution.}
    \label{fig:diag}
\end{figure}

The second central question of this paper is how to convert LFLR \textbf{trajectory data} into HFHR trajectory data using unpaired datasets.  We denote the trajectory of $n$ temporal snapshots with a fixed time step $\Delta t$ as $\boldsymbol{u} := \{u^1, \dots, u^n\}$. Here, $u^\tau$ (where $\tau = 1, \dots, n$) denotes the data at time $t = \tau \Delta t$. Depending on the context, $u^\tau$ may represent HFHR data $u^{\tau,h}$, LFLR data $u^{\tau,l}$, or HFLR data $\tilde{u}^{\tau, h}$. For example, $\boldsymbol{u}^h := \{u^{1,h}, \dots, u^{n,h}\}$ represents an HFHR trajectory simulation. Thus the problem can be formulated as an unpaired domain translation task between the empirical datasets $\{\boldsymbol{u}^l\}$ and $\{\boldsymbol{u}^h\}$. 

A naive approach to enhance the LFLR trajectory simulation is to refine it snapshot by snapshot using the method designed for the time-snapshot setting, which is referred to as $\textit{snapshot-wise refinement}$. However, because the LR solver introduces inaccuracies that accumulate over time, this snapshot-wise refinement becomes less reliable at later times. 

To address this limitation, we propose two complementary approaches that learn the dynamic evolution of trajectories from high-fidelity data at different resolution levels. The first approach trains $\tilde{\mathcal{G}}^h$ to capture the system’s evolution at the low resolution level using HFLR $\{ \tilde{\bu}^h \}$, which is referred to as $\textit{HFLR dynamics learning}$. The second approach trains $\mathcal{G}^h$ to learn the evolution at the high resolution level using HFHR $\{ \bu^h \}$, which is referred to as $\textit{HFHR dynamics learning}$. \cref{fig:diag_evo} illustrates and compares these three strategies: the snapshot-wise enhancement, HFLR dynamics learning, and HFHR dynamics learning. Note that the translation $\mathcal{T}$ and the super-resolution $\mathcal{S}$ remain the same as those employed in the time-snapshot setting. Although the figure illustrates the generation of the HFHR prediction at $t=t_n$, the same procedure can be applied to any or all time steps in the trajectory.

Among these three strategies, snapshot-wise refinement suffers from increasing inaccuracy over time, while HFHR dynamics learning requires a model capable of resolving small-scale dynamics at the high resolution level. Such a model must be sufficiently large and complex; however, as discussed in \cite{molinaro2024generative,mardani2023generative,rampal2024enhancing}, even large deterministic frameworks struggle to accurately capture these small-scale dynamics in practice. Consequently, we adopt the HFLR dynamics learning in this paper. This approach follows the same general structure as the snapshot-wise setting, but it additionally incorporates a neural operator to model the system dynamics over time. The complete methodology will be detailed in \cref{sec:method}. For completeness, we also include a numerical comparison of all three strategies in \cref{sec:numerical}. Source code is available at \url{https://github.com/woodssss/Unpaired_SR_demo_code}.

\begin{figure}[ht]
    \centering
    \includegraphics[width=0.32\textwidth]{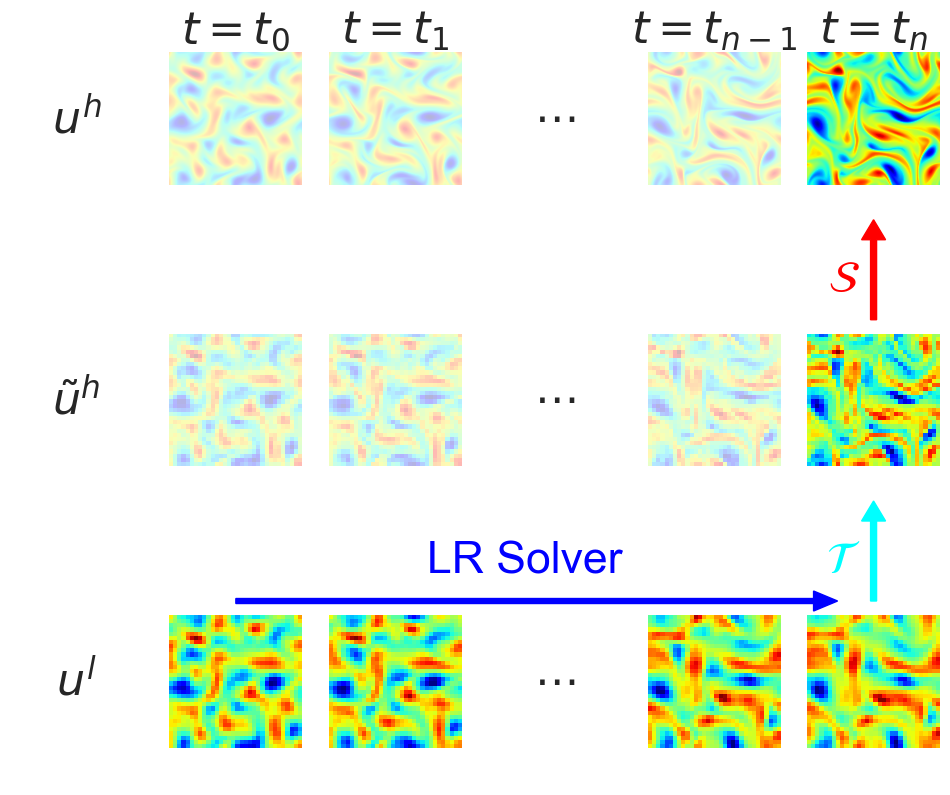}
    \includegraphics[width=0.32\textwidth]{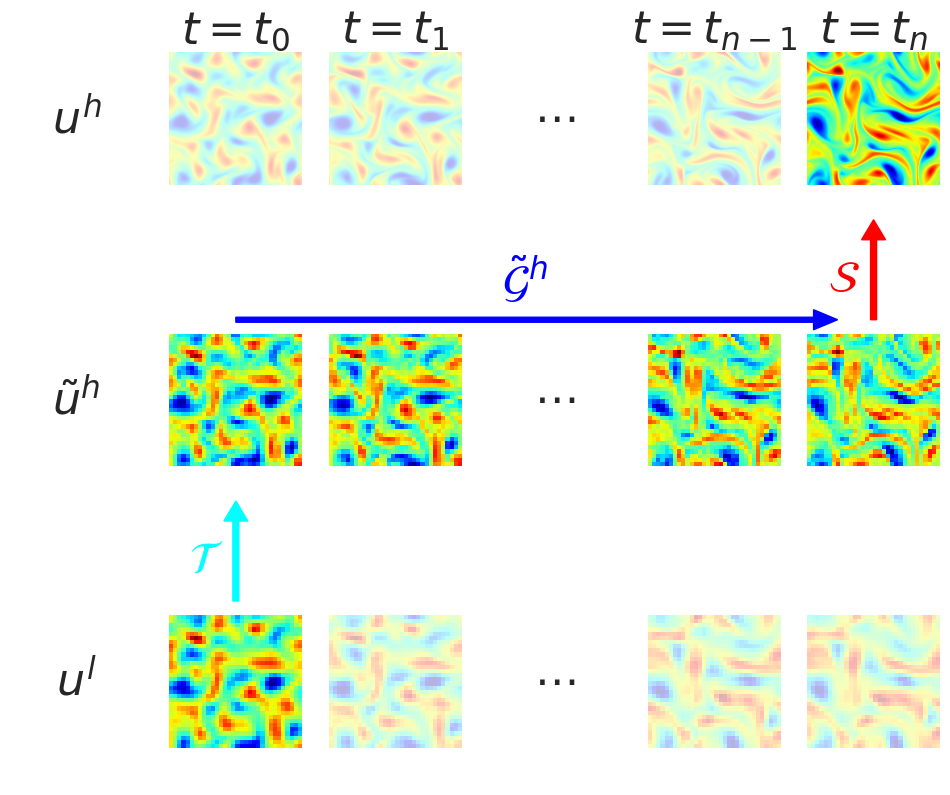}
    \includegraphics[width=0.32\textwidth]{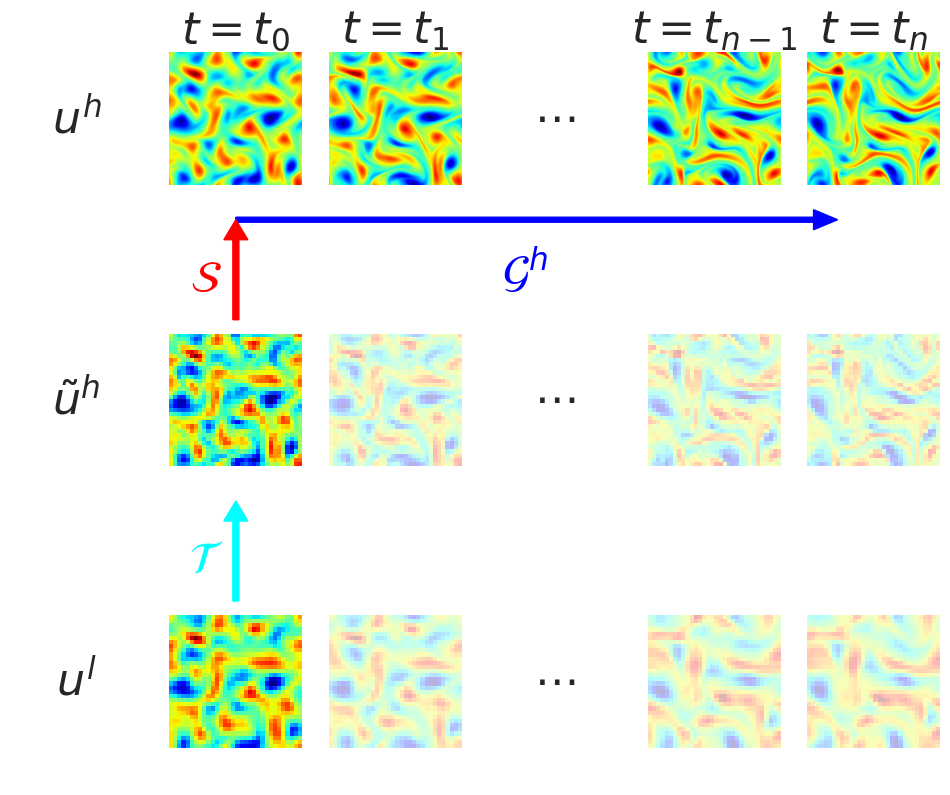}
    \caption{Comparison of snapshot-wise refinement (left), HFLR dynamics learning (middle), and HFHR dynamics learning (right) for enhancing the trajectory data.}
    \label{fig:diag_evo}
\end{figure}

\textbf{Main Contributions.} The main contributions of this paper can be summarized as follows:
\begin{itemize}
    \item We introduce an enhanced DDIB (EDDIB) approach that significantly improves the performance of unpaired domain translation, particularly under limited training data constraints. We provide theoretical analyses and numerical experiments to demonstrate its effectiveness.

    \item We propose a two-step diffusion model-based approach for unpaired SR in fluid dynamics. This approach preserves large-scale coherent structures while recovering statistically consistent fine-scale details, addressing the limitations of existing SR approaches.

    \item By integrating our diffusion-based approach with neural operator, we enable accurate and stable long-term enhancement of LFLR trajectories in fluid dynamics simulations. This hybrid approach ensures temporal stability and mitigates error accumulation, achieving high-fidelity predictions over extended time horizons.

\end{itemize}

\subsection{Related works}\label{sec:related}
Downscaling has been widely used in climate and weather modeling to enhance coarse outputs produced by global or LR models. It generally falls into two categories. In \textit{dynamical downscaling}, a global climate model (GCM) is first run to produce an LR output. This output then serves as the initial and boundary conditions for a regional climate model (RCM) or limited-area model \cite{adachi2020methodology,giorgi2012regcm4, hawkins2009potential}, which solves the governing equations at a finer resolution. This approach enables the capture of region-specific features, such as complex terrain and local circulations, that are not well represented in the coarse-scale model, but computationally expensive when running the RCM at HR. 

In contrast, \textit{statistical downscaling} leverages data-driven methods to learn mappings between LFLR and HFHR data. Recent breakthroughs in deep learning, especially in computer vision, have spurred the development of SR techniques that upsample LFLR inputs into HFHR outputs using advanced architectures. These SR approaches can be broadly classified into \textit{deterministic} and \textit{probabilistic} approaches. Deterministic approaches, including CNN-based methods \cite{dong2015image,reddy2023precipitation,sun2020downscaling} and RNN-based models \cite{zhang2021future}, are effective at capturing the ``mean-state'', because they rely on regression toward the mean during training; however, they tend to overlook data variance and thus struggle to resolve fine-scale details \cite{molinaro2024generative,mardani2023generative,rampal2024enhancing}. 

More recently, generative model-based SR methods have emerged as probabilistic approaches for capturing small-scale details in complex data. For example, GAN-based methods \cite{stengel2020adversarial,liu2022spatial} have proven effective in producing high-quality outputs, although they often suffer from training instability. Similarly, normalizing flow (NF)-based methods \cite{groenke2020climalign} model intricate data distributions effectively but tend to be computationally demanding, as they require deeper and more elaborate architectures to capture fine details in high-dimensional datasets. In contrast, diffusion-based models \cite{lu2024generative,  molinaro2024generative} have emerged as a promising alternative, offering stable training and the ability to capture fine-scale details in complex datasets. Despite their effectiveness, it is important to note that all these statistical downscaling and SR methods require paired LFLR and HFHR datasets for training, limiting their application when such paired data are unavailable.

\textbf{Domain alignment.} The task of translating $u^l$ into $\tilde{u}^h$ using unpaired datasets can be treated as an instance of unpaired domain alignment task. This involves learning the translation between two distributions without paired correspondences. Common approaches for such tasks include optimal transport (OT)-based methods, such as the Sinkhorn algorithm \cite{cuturi2013sinkhorn,peyre2019computational} or neural network approximations of transport maps \cite{korotin2022neural}, as well as generative models like GANs \cite{zhu2017unpaired,mao2017aligngan} and normalizing flows \cite{grover2020alignflow,sagawa2025gradual}. While effective, these methods inherently depend on predefined pairs of source and target domains, limiting their scalability. Specifically, ``paired domains'' in this context refer to task-specific source-target pairs, distinct from the paired LFLR-HFHR data with one-to-one correspondences. Scaling such frameworks to multiple domains would require a quadratic number of models relative to the number of domains, making them impractical for multi-domain applications.

A notable alternative is Stochastic Differential Editing (SDEdit) \cite{meng2021sdedit}, which circumvents task specificity by training a single diffusion model on the target domain. With this approach, samples from other domains are edited by injecting noise into the input and then guiding its denoising process using stochastic differential equations (SDEs) and the pre-trained diffusion model, enabling flexible domain translation while preserving the original structure. However, SDEdit faces a robustness challenge due to an inherent trade-off between fidelity and realism. In our context, this trade-off involves the need to maintain large-scale structural features while accurately recovering the desired statistical properties. Although recent efforts have enhanced the robustness of SDEdit \cite{li2024adbm, nie2022diffusion}, these methods remain primarily effective at removing extraneous biases such as spurious numerical errors or noise. In contrast, our problem requires generating physically consistent fine-scale details that are unresolved by the LR solver. A direct application of the SDEdit fails to address this challenge, as we further demonstrate later in \cref{sec:numerical}.

Another solution is the Diffusion Domain Interpolation Bridge (DDIB) \cite{su2022dual}, which employs two independently trained diffusion models, one for each domain, and uses the Probability Flow (PF) ODEs to map each domain into a shared latent space, thereby bridging the two domains through this shared latent space. Because each domain is learned separately and projected onto a common latent space, DDIB enables the reuse of these models to translate between any pair of domains. Moreover, the authors of \cite{su2022dual} show that DDIB can be viewed as a special case of optimal transport with regularization, enabling it to capture underlying correspondences between distributions. We therefore adopt DDIB to translate $u^l$ to $\tilde{u}^h$ at the low-resolution level. 

Despite its advantages, DDIB’s ability to translate between source and target domains depends on the quality of the two mappings; that is,  how effectively each domain is mapped into the shared latent space. In practice, ensuring high-quality mappings typically requires a sufficiently large dataset. While there is no analytical result on the minimum sample size needed, even relatively small datasets like CIFAR-10 \cite{krizhevsky2009learning} or MNIST \cite{lecun1998gradient} each contain 60,000 samples. Generating a comparable volume of high-fidelity, high-resolution (HFHR) simulations in scientific computing problems can be prohibitively expensive. To address this limitation, we propose an EDDIB method that requires relatively small datasets for training diffusion models.

\textbf{Most relevant work.} The most relevant work addressing similar problems, albeit using a different approach, is \cite{wan2023debias}. In this work, the goal is to transform an empirical LFLR sample distribution into samples from the corresponding HFHR distribution. Their method involves two steps: first, using an OT map to transform the LFLR data into HFLR representations, and then applying a conditional diffusion model to upscale the transformed data to high resolution. While this approach appears similar to ours, there are two key distinctions. 

First, their method relies on an OT map to translate LFLR data into HFLR form. While this OT map effectively recovers statistical properties, it fails to preserve large-scale structures within the LFLR data. For highly chaotic systems, where no true HFHR counterpart exists for a given LFLR state, their method is suitable. However, for systems where LFLR-HFHR pairs do exist and the LFLR data is roughly the lower-resolution version of the HFHR data (albeit with some bias), it is preferable to map the LFLR data to a sample within the HFHR distribution that preserves both large-scale structures and introduces the necessary fine-scale details. We have also explored an alternative OT method, neural OT \cite{korotin2022neural}, which preserves large-scale structures better but struggles to recover the desired small-scale details. In contrast, our method achieves both objectives simultaneously. Extensive numerical demonstrations are provided in \cref{sec:numerical}. Second, rather than employing a single conditional diffusion model to upscale the HFLR data, we adopt a cascaded SR3 \cite{saharia2022image} approach that iteratively refines the data using a sequence of diffusion models. This cascaded framework not only improves computational efficiency through parallel training and task decomposition but also yields superior high-resolution reconstructions by progressively refining details across scales.

\section{Methodology}\label{sec:method}
In this section, we present our numerical method. \Cref{sec:background} provides a brief introduction to the background of diffusion models. \Cref{sec:ddib background} details the vanilla DDIB method and the EDDIB method for the transformation $\mathcal{T}: u^l \rightarrow \tilde{u}^h$ is presented in \cref{sec:eddib}. In \cref{sec:sr}, we describe the application of cascaded SR3 models for the super-resolution step $\mathcal{S}: \tilde{u}^h \rightarrow u^h$, which refines these approximations to achieve high-fidelity outputs. \Cref{sec:fno} introduces the Fourier Neural Operator (FNO) \cite{li2020fourier}, utilized to learn the dynamic operators $\tilde{\mathcal{G}}^h$ for LR simulations and $\mathcal{G}^h$ for HR simulations.

\subsection{Background on diffusion models}\label{sec:background}
The score-based diffusion model \cite{song2020score, yang2023diffusion} aims to approximate a target data distribution $p_{data}(\bx)$ given a dataset $\{\bx_i \}_{i=1}^N$ in the unconditional setting. This method can also be extended to the conditional setting, where the goal is to approximate a conditional distribution  $p_{data}(\bx|\by)$ using a paired dataset $\{\bx_i, \by_i \}_{i=1}^N$. For simplicity, we focus on the unconditional case here. The score-based diffusion model operates in two stages: a forward diffusion process and a reverse generative process. 

\textbf{Forward process.} In the forward process, data is progressively corrupted with noise through a stochastic differential equation (SDE), transforming the original data distribution $p_{data}$ into a standard Gaussian distribution:
\begin{equation}\label{eqn:diff}
    \mathrm{d}\bx = f(\bx, t) \rd t + g(t)  \mathrm{d}\mathbf{w},    
\end{equation}
where $f(\bx, t)$ is the drift term, $\mathrm{d}\mathbf{w}$ is a standard Wiener process, and $g(t)$ is the diffusion coefficient. The initial condition is $\bx(0):=\bx \sim P_{data}(\bx)$. There are typically two types of SDEs used for the score-based diffusion models: Variance Exploding (VE) SDE and Variance Preserving (VP) SDE. In this paper, we adopt the VP SDE, as it transforms the original data distribution into an isotropic Gaussian distribution, which serves as the shared latent space in the DDIB. Specifically, we adopt the DDPM setting, where the drift term is defined as $f(\bx, t) = -\frac{1}{2}\beta(t)\bx$ and the diffusion coefficient is set to $g(t) = \sqrt{\beta(t)}$. This corresponds to a special case of the VP SDE:
\begin{equation}\label{eqn:forward}
\rd \bx = -\frac{1}{2} \beta(t) \bx \rd t + \sqrt{\beta(t)} \rd \mathbf{w}.
\end{equation}
The perturbed solution of this SDE at time $t$ with an initial condition $\bx(0)$ is:
\[
\bx(t) = \alpha(t)\bx(0) + \sigma(t) \beps, ~ \beps \sim \mN(0, \mI).
\]
In the above, $\beta(t)$ is a user-specific monotonically increasing function for $t \in [0, 1]$, $\alpha(t) = e^{-\frac{1}{2}\int_0^t \beta(s) \rd s}$, and $\sigma^2(t) = 1- \alpha^2(t)$. Typically, the marginal distribution at time $t=1$ approaches the standard Gaussian distribution 
\[
p(\bx(1)|\bx(0)) = \mN(\bx(1); \alpha(1) \bx(0) , (1-\alpha^2(1)) \mI),
\]
if $\beta(t)$ is selected such that $\lim_{t \rightarrow 1} \alpha(t) = 0$. 

\textbf{Reverse process.} The reverse process is described by the corresponding reverse time SDE that progressively transforms the standard Gaussian distribution back into the original data distribution.
\begin{equation}\label{eqn:reverse}
    \mathrm{d}\mathbf{x} = \left[ -\frac{1}{2} \beta(t) \bx -   \beta(t) \nabla_{\mathbf{x}} \log p_t(\bx)\right] \mathrm{d}t + \sqrt{\beta(t)}  \mathrm{d}\bar{\mathbf{w}},
\end{equation}
where $\mathrm{d}\mathbf{\bar{w}}$ represents a reverse-time Wiener process and $\log p_t(\mathbf{x})$ is the score function of the marginal distribution of the forward process at time $t$. Moreover, Song et al. \cite{song2020score} proved the existence of the probability flow ODE, which shares the same marginal distributions as the reverse-time SDE \eqref{eqn:reverse}:
\begin{equation}\label{eqn:pfode_ori}
    \frac{\rd \bx}{\rd t} = -\frac{1}{2} \beta(t) \bx - \frac{1}{2} \beta(t) \nabla_{\mathbf{x}} \log p_t(\bx).
\end{equation}

\textbf{Training and sampling.} In practice, the score function $\nabla_{\bx} \log p_t(\bx)$ is estimated using a score-matching objective \cite{song2020score}:
\begin{equation}\label{eqn:loss}
L(\theta):= \mathbb{E}_{t \sim U(0, 1), \bx \sim p(\bx), \beps \sim \mN(0, \mI)} [\| \sigma(t) S_{\theta}(\bx(t), t) + \beps \|_2^2]
\end{equation}
where $S_{\theta}(\bx(t), t)$ is a time-dependent neural network approximating the score function $\nabla_{\bx_t} \log p_{t}(\bx)$. 

Once the $S_{\theta}(\bx_t, t)$ is well trained, new samples can be generated by either solving the PF ODE \eqref{eqn:pfode_notation} or the reverse-time SDE \eqref{eqn:reverse}.

For sampling using the PF ODE, we denote the solution of the ODE driven by velocity field $v(\bx(t), t)$ from $t_1$ to $t_2$ by:
\begin{equation}\label{eqn:pfode_notation}
    \text{ODEsolve}(\bx(t_1), t_1, t_2; v) = \bx(t_1) + \int_{t_1}^{t_2} v(\bx(t), t) \rd t
\end{equation}
Using this formulation, a sample can be generated by 
$\bx(0) = \text{ODEsolve}(\bx(1), 1, 0; v_{\theta})$,
where $\bx(1) = \beps \sim \mN(0, \mI)$ and the velocity field is defined as:
\[
v_{\theta}(\bx(t), t) = -\frac{1}{2} \beta(t) \bx(t) - \frac{1}{2} \beta(t) S_{\theta}(\bx(t), t).
\]
In practice, any black-box ODE solver can be employed to perform this integration, making PF ODE-based sampling computationally efficient. On the other hand, reverse-time SDE sampling can be performed using any general-purpose SDE solver to integrate the reverse-time SDE \eqref{eqn:reverse}, and it typically produces higher-quality samples compared to PF ODE-based sampling. To further improve the quality of samples generated by the reverse-time SDE, Predictor-Corrector (PC) samplers can be employed. These samplers combine numerical SDE solvers with score-based Markov Chain Monte Carlo (MCMC) approaches \cite{song2020denoising}, offering enhanced performance for high-fidelity sampling, thus we adopt them for the SR step.

\subsection{Dual diffusion implicit bridges}\label{sec:ddib background}
The dual diffusion implicit bridges (DDIB) method \cite{su2022dual} addresses unpaired domain translation by independently training separate diffusion models for the source and target domains. Each diffusion model used for mapping its respective domain to a shared latent space, enabling the translation of a sample from the source domain to a corresponding sample in the target domain through a two-step process. First, the sample from the source domain is encoded into a latent representation within the shared latent space, and then it is decoded into the corresponding sample in the target domain. When the VP SDE is employed in the forward diffusion process, the shared latent space typically corresponds to a standard Gaussian distribution.

Note that at the low resolution level, we aim to transform the LFLR data $u^l$ to its HFLR counterpart $\tu$ using two unpaired datasets $\{ u^l_i\}_{i=1}^N$ and $\{ \tilde{u}^h_j\}_{j=1}^M$. To achieve this, we adapt the DDIB to our problem by training two separate unconditional diffusion models: $S^l_{\xi}$ for dataset $\{ u^l_i\}_{i=1}^N$ and $\tilde{S}^h_{\zeta}$ for dataset $\{ \tilde{u}^h_j\}_{j=1}^M$. The training procedure for $S^l_{\xi}$ is outlined in \cref{alg:utilde} and the same procedure can be extended to train $\tilde{S}^h_{\zeta}$.
\begin{algorithm}[ht]
\caption{Unconditional Diffusion Model}
\label{alg:utilde}
\begin{algorithmic}[1]
\REQUIRE Training datasets $\mathcal{A} = \{ u^l_i\}_{i=1}^N $, noise scheduling function $\alpha(t), \sigma(t)$, batch size $B$ and max iteration $Iter$
\STATE Initialize $k=0$
\WHILE{$k<Iter$}
    \STATE Sample $\{ u^l_j \}_{j=1}^B \sim \mathcal{A}$ 
    \STATE $t \sim U[0, 1]$
    \STATE $\beps_j \sim \mN(0, \mI)$ for $j=1, \cdots, B$
    \STATE Compute $u_j(t) = \alpha(t) u_j + \sigma(t) \beps_j$
    \STATE Update $\xi$ using the Adam optimization algorithm \cite{kingma2014adam} to minimize the empirical loss:
    \[
    L(\xi)= \frac{1}{B} \sum_{j=1}^B \|  \beps_j + \sigma(t) S^l_{\xi}(u_j(t), t) \|_2^2
    \]
    \STATE $k \gets k+1$
\ENDWHILE
\RETURN Diffusion model $S^l_{\xi}(u(t), t)$
\end{algorithmic}
\end{algorithm}

Once these two diffusion models $S_\xi^l$ and $\tilde{S}_\zeta^h$ are well trained, the translation is achieved by the following two steps:
\begin{itemize}
    \item Latent encoding: $z = \text{ODEsolve}(u^l, 0, 1; \mTl_{\xi})$,
    \item Decoding: $\tilde{u}^h = \text{ODEsolve}(z, 1, 0; \mTt_{\zeta})$,
\end{itemize}
where the velocity fields are 
\begin{equation}\label{eqn:velo1}
\mTl_\xi(u^l(t), t) = -\frac{1}{2} \beta(t) u^l(t) - \frac{1}{2}\beta(t)S^l_{\xi}(u^l(t), t), 
\end{equation}
and 
\begin{equation}\label{eqn:velo2}
\mTt_\zeta (\tu(t), t) = -\frac{1}{2} \beta(t) \tu(t) - \frac{1}{2}\beta(t) \tilde{S}^h_{\zeta}(\tu(t), t).
\end{equation}
As discussed in \cite{su2022dual}, the DDIB is equivalent to a Schrödinger bridges problem and can also be interpreted as a Monge-Kantorovich optimal transport problem with an additional entropy regularization term. While conceptually related to traditional OT-based methods, the DDIB offers greater flexibility and adaptability in handling complex translation tasks.

\subsection{Enhanced DDIB}\label{sec:eddib}
In this subsection, we introduce the enhanced DDIB (EDDIB) method, which is based on a more general setting. Formally, let $\uul(t_1)$ be the translation using PF ODE driven by velocity field $\mTl_\xi$ from $t=0$ to $t=t_1$ with initial condition $u^l \sim p(u^l)$, that is 
\begin{equation}\label{eqn:ult1}
    \uul(t_1) = \text{ODEsolve}(u^l, 0, t_1; \mTl_\xi).
\end{equation}
The resulting $\textit{perturbed}$ distribution of $\uul(t_1)$ is denoted as $p(\uul(t_1))$. Similarly, let $\utu(t_2)$ be the translation using PF ODE driven by velocity field $\mTt_\zeta$ from $t=0$ to $t=t_2$ with initial condition $\tu \sim p(\tu)$, that is 
\begin{equation}\label{eqn:tut2}
\utu(t_2) = \text{ODEsolve}(\tu, 0, t_2; \mTt_\zeta).
\end{equation}
The resulting $\textit{perturbed}$ distribution of $\utu(t_2)$ is denoted as $p(\utu(t_2))$. The standard DDIB requires these two distributions align closely with the standard Gaussian distribution at $t_1=t_2=1$, i.e., $p(\uul(1)) \approx \mN(0, \mI) \approx p(\utu(1))$. However, achieving this alignment in practice can be computationally expensive. Note that the distributions $p(u^l(1))$ and $p(\uul(1))$ are different. The former is obtained from the forward diffusion process governed by the forward-time SDE \eqref{eqn:diff}. When the noise scheduling function $\beta(t)$ is chosen appropriately, we can expect $p(u^l(1)) \approx \mN(0, \mI)$. On the other hand, the translation from $u^l$ to $\uul(1)$ is deterministic and obtained by solving a PF ODE using a well-trained diffusion model $S^l_\xi$. Ensuring that the distribution $p(\uul(1))$ aligns closely with the standard Gaussian distribution requires a sufficiently large dataset $\{ u^l_i\}_{i=1}^N$, which may not always be available in practice. 

To resolve this issue, we propose the following EDDIB. We denote $\hat{u}(t_1, t_2)$ as the $\textit{translated LFLR}$ state using PF ODE driven by velocity field $\mTt$ from $t=t_2$ to $t=0$ with initial condition $\uul(t_1)$ from \eqref{eqn:ult1}, that is
\begin{equation}\label{eqn:recon}
\hat{u}^l(t_1, t_2) = \text{ODEsolve}(\uul(t_1), t_2, 0; \mTt_\zeta).
\end{equation}
The resulting $\textit{translated LFLR}$ distribution of $\hat{u}^l(t_1, t_2)$ is denoted as $p(\hat{u}^l(t_1, t_2))$. The performance of this unpaired translation task can be evaluated by measuring the distance, under a specific metric, between the translated distribution $p(\hat{u}^l(t_1, t_2))$ and the target distribution $p(\tu)$. Because the DDIB relies on a deterministic translation mechanism, instead of directly comparing $p(\hat{u}^l(t_1, t_2))$ and $p(\tu)$, we can examine the distance between two intermediate distributions. Specifically, both the translation from $p(\utu(t_2))$ to $p(\tu)$ and translation from $p(\uul(t_1))$ to $p(\hat{u}^l(t_1, t_2))$ are governed by the same PF ODE with the same velocity field $\mTt_\zeta$ over the same time interval, and this common governing mechanism allows us to evaluate the performance of unpaired translation task using hyperparameters $t_1$ and $t_2$ by studying the distance between $p(\uul(t_1))$ and $p(\utu(t_2))$ under a specified metric. We propose the following two propositions for performance evaluation, which also illustrate the advantages of the EDDIB. Moreover, we provide their proofs in \cref{sec:proof}.

\begin{proposition}\label{prop:1}
For any $t_1, t_2 \in (0, 1]$, the KL divergence between the translated LFLR distribution $p(\hat{u}^l(t_1, t_2))$ and the target HFLR distribution $p(\tu)$ equals the KL divergence between two perturbed distribution $p(\uul(t_1))$ and $p(\utu(t_2))$, that is
\[
D_{KL}(p(\hat{u}^l(t_1, t_2)) \| p(\tu)) = D_{KL}(p(\uul(t_1))\|p(\utu(t_2))).
\]
\end{proposition}
This proposition indicates that minimizing the KL divergence between the translated distribution $p(\hat{u}(t_1, t_2))$ and the target distribution $p(\tu)$ can be achieved by choosing hyperparameters $t_1$ and $t_2$ that minimize the KL divergence between $p(\uul(t_1))$ and $p(\utu(t_2))$. Note that the standard DDIB setup corresponds to the special case $t_1=t_2=1$. In contrast, the EDDIB permits flexibility by allowing $t_1$ and $t_2$ to vary, enabling a search for the optimal hyperparameter pair. Beyond KL divergence, we also leverage the stability of ODE flow in the Wasserstein-2 ($\mW$) distance, which motivates the second proposition.
\begin{proposition}\label{prop:2}
Assume $\mTt_\zeta(\tu(t), t)$ is $L_s$-Lipchitz continuous in $\tu(t)$, then for any $t_1, t_2 \in (0, 1]$, the $\mW$ distance between the translated distribution $p(\hat{u}^l(t_1, t_2))$ and the target distribution $p(\tu)$ is upper bounded by the $\mW$ distance between two perturbed distribution $p(\uul(t_1))$ and $p(\utu(t_2))$
\[
\mW(p(\hat{u}^l(t_1, t_2)), p(\tu)) \leq e^{L_s  t_2} \mW(p(\uul(t_1)), p(\utu(t_2))).
\]
\end{proposition}
This proposition shows that, unlike \cref{prop:1}, the upper bound now includes a coefficient dependent on $L_s$ and $t_2$. Although we lack prior knowledge of $L_s$, a key factor in minimizing the $\mW$ distance between $p(\hat{u}^l(t_1, t_2))$ and $ p(\tu)$ remains reducing the distance between $p(\uul(t_1))$ and $p(\utu(t_2))$. Both propositions therefore suggest that, to keep $p(\hat{u}^l(t_1, t_2))$ close to $ p(\tu)$, one should select optimal values $t^*_1$ and $t^*_2$ that minimize the distance between $p(\uul(t_1))$ and $p(\utu(t_2))$. This insight leads to \cref{alg:t1t2}, which identifies the optimal pair $t^*_1$ and $t^*_2$. In practice, however, relying solely on the $\mW$ and KL divergence does not necessarily yield the best performance. Instead, we experiment with various metrics and select the optimal one, as detailed in \cref{sec:numerical}.

\begin{algorithm}[htbp]
\caption{Selection of $t_1$ and $t_2$}
\label{alg:t1t2}
\begin{algorithmic}[1]
\REQUIRE Two datasets $\{ u^l_i \}_{i=1}^N$ and $\{ \tilde{u}^h_j \}_{j=1}^M$, two velocity fields $\mTl_\xi$ and $\mTt_\zeta$, number of steps $N_{t_1}$ and $N_{t_2}$, and a metric $\mathcal{M}$.
\STATE Initialize $d_{\min} \gets \infty$
\STATE Initialize $t^{*}_1 \gets 0$, $t^{*}_2 \gets 0$
\FOR{$p \gets 0$ to $N_{t_1}-1$}
    \STATE $t_1 \gets p \cdot \frac{1}{N_{t_1}-1}$.
    \FOR{$q \gets 0$ to $N_{t_2}-1$}
    \STATE $t_2 \gets q \cdot \frac{1}{N_{t_2}-1}$.
    \STATE Obtained $\{ \uul_i (t_1) \}_{i=1}^N$ via \eqref{eqn:ult1} using velocity $\mTl_\xi$.
    \STATE Obtained $\{ \utu_j (t_2) \}_{j=1}^M$ via \eqref{eqn:tut2} using velocity $\mTt_\zeta$.
    \STATE Compute $d = \mathcal{M}(\{ \uul_i (t_1) \}_{i=1}^N, \{ \utu_j (t_2) \}_{j=1}^M)$.
    \IF{$d < d_{\min}$}
        \STATE $d_{\min} \gets d$
        \STATE $t^{*}_1 \gets t_1$, $t^{*}_2 \gets t_2$.
    \ENDIF
    \ENDFOR
\ENDFOR
\RETURN Optimal $t^{*}_1$ and $t^{*}_2$.
\end{algorithmic}
\end{algorithm}
For brevity, we denote the translated data as $\hat{u}^{l,*}_i = \hat{u}^l_i(t_1^*, t_2^*)$ and we use $\mathcal{T}$ for this translation, that is $\hat{u}^{l,*} = \mathcal{T} u^l$. The entire process of tanslating $\datal$ to $\datath$ is summarized in \cref{alg:lr_trans}.

\begin{algorithm}[htbp]
\caption{Translation by EDDIB}
\label{alg:lr_trans}
\begin{algorithmic}[1]
\REQUIRE LFLR testing dataset $\dataltest$, two velocity fields $\mTl_\xi$ and $\mTt_\zeta$, number of steps $N_{t_1}$ and $N_{t_2}$, and a metric $\mathcal{M}$.
\STATE Obtain optimal $t^*_1, t^*_2$ from \cref{alg:t1t2}.
\STATE Obtain $\{ \uul_i(t_1^*) \}_{i=1}^Q$ using \eqref{eqn:ult1}.
\FOR{$i \gets 1$ to $Q$}
\STATE Compute intermediate state $\uul_i(t_1^*)=\text{ODEsolve}(u^l, 0, t^*_1; \mTl_\xi)$, see equation \eqref{eqn:ult1}.
\STATE Compute intermediate state $\hat{u}^{l,*}_i = \hat{u}_i(t_1^*, t_2^*)=\text{ODEsolve}(\uul_i(t_1^*), t^*_2, 0; \mTt_\zeta)$, see equation \eqref{eqn:recon}.
\ENDFOR
\RETURN Translated dataset $\{ \hat{u}^{l,*}_i \}_{i=1}^Q$.
\end{algorithmic}
\end{algorithm}

\subsection{Super-Resolution via SR3}\label{sec:sr}
In the SR step, we employ the cascaded SR3 model \cite{saharia2022image} to upscale the HFLR $\tu$ to HFHR $u^h$. Since the restriction operator $\mathcal{R}$ is user-specified and known, a paired dataset of HFLR and HFHR data, $\{ \tilde{u}_i^h, u_i^h \}_{i=1}^N$, can be generated from an HFHR dataset $\{u_i^h \}_{i=1}^N$. This paired dataset can then be used to train a conditional diffusion model $S_{\xi}(u^h(t), \tilde{u}^h, t)$, which is used for approximating the conditional distribution $p(u^h | \tilde{u}^h)$. The training process involves minimizing the following loss function:
\begin{equation}\label{eqn:loss_sr} L(\eta):= \mathbb{E}_{t \sim U(0, 1), u^h \sim p(u^h), \beps \sim \mN(0, \mI)} [\| \sigma(t) S_{\eta}(u^h(t), \tilde{u}^h, t) + \beps \|_2^2],
\end{equation}
as detailed in \cref{alg:uh}. 

With a well-trained conditional diffusion model $S_{\eta}(u^h(t), \tilde{u}^h, t)$ obtained from \cref{alg:uh}, a HFHR data $u^h$ corresponding to the given HFLR data $\tilde{u}^h$ can be generated using Predictor-Corrector (PC) samplers. As discussed in \cite{saharia2022image}, the super-resolution (SR) task with a large magnification factor can be split into a sequence of SR tasks with smaller magnification factors. This approach enables parallel training of simpler models, each requiring fewer parameters and less training effort. The training of each SR model follows the procedure outlined in \cref{alg:uh}.

\subsection{Neural Operator for dynamics}\label{sec:fno}
Neural operators, such as FNO \cite{li2020fourier} and DeepONet \cite{lu2021learning}, are widely used methods for learning dynamics directly from data. Consider an evolutionary PDE
\begin{equation}
    \begin{cases}
        \partial_t u(x, t) = \mL(u), \quad \quad (x, t) \in D \times (0, T] \\
        u(x, 0) = u_0(x), \quad \quad x \in D,
    \end{cases}
\end{equation}
where $\mL$ is a differential operator, $u_0(x)\in \mathcal{V}$ is the initial condition and $u(x, t) \in \mathcal{U}$ for $t>0$ is the solution trajectory. Here $D \subset \mathbb{R}^d$ is a bounded open set and $\mathcal{V}=\mathcal{V}(D;\mathbb{R}^d), ~\mathcal{U}=\mathcal{U}(D;\mathbb{R}^d)$ are two separable Banach spaces. Our objective is to approximate the solution operator $\mG: u_0(x) \mapsto u(x, t)$ using a neural network.

Note that the initial function $u_0(x)$ is defined on the spatial domain while the trajectory $u(x, t)$ is defined on the spatiotemporal domain. To handle the temporal dimension, two common approaches are typically employed. The first approach treats temporal $t$ as an independent variable alongside spatial variables, increasing the physical dimensionality of the problem. While theoretically sound, it requires a large number of snapshots to resolve transient dynamics, leading to prohibitive computational costs. In this work, we adopt the second strategy, discretizing the temporal domain into a fixed sequence of $n$ snapshots. These snapshots are treated as input channels, forming a trajectory $\bu := \{ u_0, u_1, \cdots, u_{n-1} \} \in \mathbb{R}^{n \times q \times q}$, where $u_0 \in \mathbb{R}^{q \times q}$ is the initial condition and $q$ denotes the size of mesh grid. This discretization enables efficient learning of the operator $\mG: u_0 \rightarrow \boldsymbol{u}$, which maps the initial state to the spatiotemporal trajectory. As shown in \cref{fig:diag_evo}, to enhance the LFLR simulation data, the dynamics can be learned at two resolution levels: the neural operator $\mG^h_{\phi}$ approximates the operator $\mG^h$ at HFHR level and $\tilde{\mG}_{\psi}$ as the approximation to operator $\tilde{\mG}^h$ at HFLR level. The training of the neural operator $\mG^h_{\phi}$ involves minimizing the loss function $L(\phi) = \mathbf{E}_{\bu^h \sim p(\bu^h)} (\| \mG_{\phi}(u^h_0) - \bu^h \|)$,
which is detailed in \cref{alg:fno}. Similarly, $\tilde{\mG}_{\psi}$ is trained via an analogous procedure, adapted to the HFLR simulation dataset.

\section{Diffusion-based Unpaired SR}
\subsection{Time-snapshot data}\label{subsec:snapshot}
For demonstration purposes, we focus on a scenario where the HR data is at resolution of $256 \times 256$ and LR data is at resolution of $32 \times 32$, although our method can be easily extended to more general settings. To handle the magnification factor of $8$, we partition the SR task into three smaller SR tasks, each with a magnification factor of 2. Specifically, we define three restriction operators $\mathcal{R}_1$, $\mathcal{R}_2$ and $\mathcal{R}$, which downsample the HFHR data to resolutions of $128 \times 128$, $64 \times 64$ and $32 \times 32$, respectively, to generate three datasets. By training three separate conditional diffusion models $S_{\eta_1}$, $S_{\eta_2}$ and $S_{\eta_3}$ using these three datasets in parallel and chain these models to perform cascaded SR, the LR data at $32 \times 32$ is progressively refined to $256 \times 256$.

The complete procedure for the time-snapshot problem consists of three stages. First, in the \textit{data preparation stage}, lower-resolution versions of the HFHR data are generated using user-specified restriction operators. Next, in the \textit{training stage}, two unconditional diffusion models are trained to facilitate the debiasing step at low resolution, while three of conditional diffusion models are trained with paired data to capture the relationships across different resolutions. At the \textit{inference stage}, the LFLR data are firstly translated using EDDIB with two well-trained unconditional diffusion models and then upsampled iteratively by three well-trained conditional diffusion models in a cascaded SR3 method. The implementation details are presented in \cref{alg:time_independent}.
\begin{algorithm}[htbp]
\caption{Unpaired SR for time snapshot data}
\label{alg:time_independent}
\begin{algorithmic}[1]
\REQUIRE An LFLR training dataset $\datal$ and an HFHR training dataset $\datah$, an LFLR testing dataset $\{ u^l_q\}_{q=1}^Q$, noise scheduling function $\alpha(t), \sigma(t)$, batch size $B$ and max iteration $Iter$.
\STATE \textbf{Data Preparation stage:}
\STATE Generate three lower resolution datasets from 
$\datah$ using $\mathcal{R}_1$, $\mathcal{R}_2$ and $\mathcal{R}$, resulting in $\datathl$, $\datathll$ and $\datath$.
\STATE \textbf{Training stage:}
\STATE Train two unconditional diffusion models $S^l_\xi$ and $\tilde{S}^h_\zeta$ on two datasets $\datal$ and $\datath$, respectively, via \cref{alg:utilde}.
\STATE Train three conditional diffusion models $S_{\eta_1}$, $S_{\eta_2}$ and $S_{\eta_3}$ on paired datasets $\pdatath$, $\pdatathl$ and $\pdatathll$, respectively, via \cref{alg:uh}.
\STATE \textbf{Inference stage:}
\STATE Translate the LFLR testing dataset $\dataltest$ using \cref{alg:lr_trans}, resulting in $\datahatl$.
\STATE Downscale $\datahatl$ to the final HR dataset $\datahath$ using cascaded SR3 via running \cref{alg:uh} iteratively based on $S_{\eta_1}$, $S_{\eta_2}$ and $S_{\eta_3}$.
\RETURN HFHR prediction $\datahath$.
\end{algorithmic}
\end{algorithm}

\subsection{Trajectory data}
In this subsection, we detail our methodology for enhancing LFLR trajectory data. As described in \cref{sec:problem_setting} and depicted in \cref{fig:diag_evo}, we consider three possible approaches: snapshot-wise refinement, HFLR dynamics learning, and HFHR dynamics learning. Due to its effectiveness in capturing fine-scale details, providing stable long-term predictions, and ensuring low computational overhead, we adopt the HFLR dynamics learning approach. The complete workflow consists of three main stages. 

In the \textit{data preparation} stage, we first transform trajectory datasets into snapshot datasets. Recall that we use bold symbols to denote trajectory data, with each trajectory represented as $\boldsymbol{u} := \{u^1, \dots, u^n\}$, where $n$ is the number of snapshots contained in each trajectory. Specifically, the LFLR trajectory dataset $\{ \bu^l_i\}_{i=1}^{N'}$ is decomposed into $\{ u^l_i\}_{i=1}^N$ with $N = nN'$, and similarly, the HFHR dataset $\{ \bu^h_j\}_{j=1}^{M'}$ yields $\{ u^h_j\}_{j=1}^{M}$ with $M = nM'$. We then generate corresponding lower-resolution versions of the HFHR datasets through user-defined restriction operators, producing paired HFLR and HFHR datasets.

Next, during the \textit{training} stage, we independently train two unconditional diffusion models on the LFLR and HFLR datasets to facilitate debiasing at low resolution, and a sequence of three conditional diffusion models on paired data to iteratively upsample the LR data; concurrently, an FNO is trained on the initial  state of LR trajectory data to model the system dynamics. Finally, at the \textit{inference} stage, the initial state of the LFLR trajectory data is first translated into an HFLR prediction using EDDIB with the two unconditional diffusion models. Subsequently, the FNO predicts the system dynamics at later time steps in the LR setting, and the cascaded SR3 is applied to upsample these predictions to high resolution. The complete procedure is detailed in \cref{alg:evo}.

\begin{algorithm}[ht]
\caption{Unpaired SR for Trajectory Data}
\label{alg:evo}
\begin{algorithmic}[1]
\REQUIRE LFLR trajectory training dataset $\edatal$ and HFHR trajectory training dataset $\edatah$, LFLR trajectory testing dataset $\edataltest$, noise scheduling function $\alpha(t), \sigma(t)$, three conditional diffusion models $S_{\eta_1}$, $S_{\eta_2}$ and $S_{\eta_3}$, batch size $B$ and max iteration $Iter$.
\STATE \textbf{Data Preparation stage:}
\STATE Convert the evolutionary datasets $\edatal$ and $\edatah$ into snapshots datasets $\datal$ and $\datah$, here $M=nM'$ and $N=nN'$.
\STATE Given three restriction operators $\mathcal{R}_1$, $\mathcal{R}_2$ and $\mathcal{R}$, generate lower resolution datasets $\datathl$, $\datathll$ and $\datath$.
\STATE Generate lower resolution evolutionary dataset $\edatath$ by applying restriction operator $\mathcal{R}$ to $\edatah$ for each trajectory snapshot wise.
\STATE \textbf{Training stage:}
\STATE Train two unconditional diffusion models $S^l_\xi$ and $\tilde{S}^h_\zeta$ on two datasets $\datal$ and $\datath$, respectively.
\STATE Train three conditional diffusion models $S_{\eta_1}$, $S_{\eta_2}$ and $S_{\eta_3}$ on paired datasets $\pdatath$, $\pdatathl$ and $\pdatathll$, respectively.
\STATE Train evolutionary models $\tilde{\mG}^h$ using $\edatath$.
\STATE \textbf{Inference stage:}
\STATE \textit{Initial state translation}: For the initial state $\{ u_i^{0, l} \}_{i=1}^{Q'}$ (the first snapshots in evolutionary dataset $\edataltest$), obtain $\{ \hat{u}^{*, 0, l}_i \}_{i=1}^{Q'}$ using \cref{alg:lr_trans}.
\STATE \textit{Dynamics prediction}: Apply the neural operator $\tilde{\mG}^h$ to $\{ \hat{u}^{*, 0, l}_i \}_{i=1}^{Q'}$, resulting in the prediction of HFLR evolutionary data, denoted as $\{ \hat{\mathbf{u}}^{l}_i \}_{i=1}^{Q'}$.
\STATE \textit{Super-resolution}: For each snapshot in $\{ \hat{\mathbf{u}}^{l}_i \}_{i=1}^{Q'}$, apply the cascaded SR3 ($S_{\eta_1}$, $S_{\eta_2}$ and $S_{\eta_3}$) to generate the final HFHR prediction $\{ \hat{\mathbf{u}}^{h}_i \}_{i=1}^{Q'}$.
\RETURN HFHR trajectory prediction $\{ \hat{\mathbf{u}}^{h}_i \}_{i=1}^{Q'}$.
\end{algorithmic}
\end{algorithm}

\section{Numerical Results}\label{sec:numerical}
In this section, we present numerical results demonstrating the effectiveness of the proposed method through three fluid dynamics problems. For snapshot problems, we consider the 2D Navier-Stokes equation, 2D Euler equation and {nonlinear water wave equation}. For the evolutionary problem, we focus on the 2D {Navier-Stokes equation}.

\subsection{Experimental settings}\label{sec:exp_setting}
For the time-snapshot data, we compare the performance of our proposed method with several baseline approaches, all of which use the cascaded SR3 model to perform SR from predicted HFLR data to predicted HFHR data. The primary difference lies in how each method realizes the translation task in the debiasing step. Each baseline model is named according to the specific debiasing method it employs:
\begin{itemize}
    \item \textbf{Direct}: This approach applies the SR3 model directly to LFLR data without a debiasing step.

    \item \textbf{OT}: Adapted from \cite{wan2023debias}, this method uses an OT map (implemented via \texttt{ott-jax}\cite{cuturi2013sinkhorn, cuturi2022optimal}) for debiasing at the LR stage, followed by cascaded SR3 for super-resolution (instead of using a single conditional diffusion model). 

    \item \textbf{NOT}: Employs neural optimal transport (NOT) as described in \cite{korotin2022neural} for debiasing at the LR stage, with subsequent cascaded SR3 for super-resolution.

    \item \textbf{SDEdit}: Uses SDEdit \cite{meng2021sdedit} to translate LFLR data into HFLR prediction, with subsequent cascaded SR3 for super-resolution.

    \item \textbf{EDDIB (Our method)}: Uses the EDDIB model to translate LFLR data into HFLR prediction, with subsequent cascaded SR3 for super-resolution.
\end{itemize}
For the trajectory data, we consider three approaches that differ in how they learn temporal dynamics, as described in \cref{sec:problem_setting}. For simplicity, we refer to the snapshot-wise enhancement, HFLR dynamics learning, and HFHR dynamics methods as Method I, II, and III, respectively. The details for producing the HFHR prediction at $t=T$ are as follows:
\begin{itemize}
    \item \textbf{Method I (LR solver $+\mathcal{T}+\mathcal{S}$)}: Starting with initial LFLR data $u^{0, l}$, the LR solver produces an LFLR prediction $u^{T, l}$ at $t=T$. Subsequently, EDDIB is applied for translation, and cascaded SR3 is used for super-resolution, yielding the HFHR prediction at $t=T$.

    \item \textbf{Method II ( $\mathcal{T}+\tilde{\mathcal{G}}^h+\mathcal{S}$ )}:  
    EDDIB is first used to translate the initial LFLR data \( u^{0, l} \) into HFLR prediction \( \tilde{u}^{0, h} \). An FNO that learns coarse-scale dynamics then generates an HFLR prediction \( \tilde{u}^{T, h} \) at \( t = T \), which is subsequently refined using cascaded SR3 to obtain the HFHR prediction at \( t = T \).

    \item \textbf{Method III ( $\mathcal{T}+\mathcal{S}+\mathcal{G}^h$)}:  
    In this approach, EDDIB and cascaded SR3 are employed to enhance the initial LFLR data \( u^{0, l} \) into an HFHR prediction at \( t = 0 \). An FNO that learns fine-scale dynamics then produces the HFHR prediction at \( t = T \).
\end{itemize}

\textbf{Data generation.}\label{sec:data_gen} To generate the unpaired LFLR and HFHR training datasets, $\datal$ and $\datah$, and  the paired LFLR and HFHR evaluation datasets, $\dataltest$ and $\datahtest$, for the 2D Navier-Stokes equation and 2D Euler equation, we proceed as follows. First, we randomly sample $N+M+Q$ initial conditions from a certain random field discretized on a $256 \times 256$ spatial grid. The first $N$ initial conditions are downsampled to a resolution of $32 \times 32$ and use them as inputs to a LR solver, running for a temporal duration of $T$ to construct the LFLR dataset $\datal$. For the next $M$ initial conditions, we apply an HR solver for the same duration $T$ to generate HFHR dataset $\datah$. For the last $Q$ initial conditions, we first downsample them to a resolution of $32 \times 32$ and use them as inputs to an LR solver, running for a temporal duration of $T$ to construct the LFLR dataset $\datal$. Simultaneously, we use the original HR initial conditions as inputs to the HR solver running for same duration, which generates the paired LFLR-HFHR dataset $\datatest$ for evaluation.

To generate the dataset for the nonlinear water wave evolution, we first initialize a wave field on a $256\times 256$ grid based on a realistic ocean wave energy spectrum. The wave field is then evolved for $1000$ s using a wave solver. The solver is described in detail in \cref{sec:wave}. We then extract the wave fields at $t=0, 50, 150, \ldots 950\,\text{s}$ and downsample them to the $32\times 32$ resolution. These downsampled snapshots are used as the initial conditions to the wave solver, which are run for a duration to construct the LFLR dataset. It should be noted that although the extracted snapshots all evolve from one initial condition, they can be considered as independent realizations of ocean wave fields because the nonlinear wave evolution can make the wave fields incoherent after $50\,\text{s}$. 

\textbf{Metrics.} In this paper, we consider several metrics to quantify the distance between two empirical distributions. These metrics will be used for evaluating the quality of HFHR prediction dataset $\datahath$ obtained from our proposed method and the reference HFHR dataset $\{ u^h_i \}_{i=1}^Q$. First, we consider unweighted mean energy log ratio (MELRu) and weighted mean energy log ratio (MELRw). Additionally, we also consider the Maximum Mean Discrepancy (MMD), Relative Mean Square Error (RMSE), Wasserstein-2 distance ($\mW$) and Total Variation Distance (TVD), the details are provided in \cref{sec:metrics}.

\textbf{Implementation and hyperparameter settings.} We use bicubic interpolation for all restriction operators $\mathcal{R}_1$, $\mathcal{R}_2$ and $\mathcal{R}$ (as applied in  \cref{alg:time_independent}, ~\cref{alg:evo}). For all the scored based diffusion model $\mS_{\xi}$, $\mS_{\zeta}$, $S_{\eta_1}$, $S_{\eta_2}$ and $S_{\eta_3}$, we adopt the UNet architecture as described in \cite{song2020score}. Details on the selection of the noise scheduling function and the hyperparameters can be found in \cref{sec:archi_hyper} in Appendix. To implement the $\text{ODEsolve}$ in \eqref{eqn:pfode_notation} and related equations, we use the RK45 scheme with a stopping criterion of $rtol=1e-5$ and $atol=1e-5$. The number of searching steps in \cref{alg:t1t2} is set to $N_{t_1}=N_{t_2}=10$. For the evolutionary problem, we adopt the FNO \cite{li2020fourier} as the neural operator. The details of both $\tilde{\mathcal{G}}^h$ and $\mathcal{G}^h$, along with the architecture and the selection of hyperparameters for other baseline models, such as NOT and SDEdit, are presented in \cref{sec:archi_hyper}.

\subsection{2D Navier-Stokes equation}\label{sec:ns}
We consider the vorticity form of the 2D Navier-Stokes equation with Kolmogorov force:
\begin{equation}\label{eqn:ns}
    \begin{cases} 
        \partial_t \omega(t,\bx) + \mathbf{u}(t,\bx) \cdot \nabla \omega(t,\bx) - \nu \nabla^2 \omega(t,\bx) = f(\bx), \quad \bx \in [0, 2\pi]^2, t \in [0,T]\\
        \nabla \cdot \mathbf{u}(t,\bx) = 0 , \quad \bx \in [0, 2\pi]^2, t \in [0,T] \\
        w(0, \bx) = w_0(\bx), \quad \bx \in [0, 2\pi]^2,
    \end{cases}
\end{equation}
where $\bx = (x_1, x_2)$, $\omega(t,\bx)$ represents the vorticity, and the velocity field is given by $\mathbf{u}(t,\bx)=\psi_y - \psi_x$, where $\omega=-\nabla^2 \psi$. The Kolmogorov force is defined as $f(\bx) =  \sin(k_0 x_1)$. In this example, we set the wavenumber $k_0 = 4$ and viscosity $\nu = 10^{-3}$. To generate initial conditions, we first generate samples from a log-normal distribution on a uniform $256 \times 256$ mesh grid defined over spatial domain $\bx \in [0, 2\pi]^2$. We run HR and LR solvers on these initial conditions for $T=1$ to obtain HFHR and LFLR datasets, respectively. In this example, both the LR and HR solvers use the finite-volume method (FVM) implemented in \texttt{jax-cfd} \cite{Dresdner2022-Spectral-ML,kochkov2021machine}. For training purposes, we use $N=4000$ LFLR data and $M=4000$ HFHR data, and use $Q=100$ LFLR-HFHR paired data for testing.

\textbf{Ablation study.} We begin by benchmarking the performance of several approaches for the debiasing step, that is to translate LFLR data to HFLR data. In addition to NOT and SDEdit, we evaluate the vanilla DDIB and our EDDIB using the three metrics described in \cref{alg:t1t2}. We denote these approaches as DDIB, EDDIB+$\mW$, EDDIB+MMD and EDDIB+MELRw. \cref{tab:comp} presents the measured distances between the translated distribution $p(\hat{u}^{*,l})$ and the target HFLR distribution $p(\tu)$ across various metrics for each method. The results clearly demonstrate that DDIB-based models offer significant advantages for the translation task. Although the MMD metric does not differentiate the methods substantially, the other metrics reveal that DDIB-based models perform considerably better. Notably, EDDIB+MELRw delivers the best overall performance, 
we thus adopt MELRw for all subsequent experiments and omit the ``MELRw'' suffix; that is, the EDDIB model hereafter refers to the EDDIB+MELRw variant.

\begin{table}[htbp]
    \centering
    \begin{tabular}{|c|c|c|c|c|c|}
    \hline
          Config & $\text{MMD}$ & $\text{MELRu}$ & $\text{MELRw}$ & $\mW$ & $\text{D}_{KL}$\\
    \hline
    \hline
    LFLR & \textbf{0.020} & 1.247 & 0.527 & 0.061 & 0.036 \\
    OT & 0.027  & 0.365  & 0.334  & 0.105  & 0.079 \\
    NOT & 0.021 & 0.754 & 0.236 & 0.063 & 0.051 \\
    SDEdit & \textbf{0.020} & 0.292 & 0.180 & 0.112 & 0.070 \\
    \hline
    \hline
    DDIB & \textbf{0.020} & 0.298 & 0.117 & 0.065 & 0.045\\
    EDDIB+$\mW$ & \textbf{0.020} & 0.941 & 0.436 & 0.061 & 0.037\\
    EDDIB+MMD & \textbf{0.020} & 0.207 & \textbf{0.062} & 0.066 & 0.039 \\
    EDDIB+MELRw & \textbf{0.020} & \textbf{0.146} & 0.109 & \textbf{0.059} & \textbf{0.034} \\
    \hline
    \end{tabular}
    \caption{Performance comparison of different debiasing methods for translating LFLR data to match the target HFLR distribution across various metrics. The table lists several distance metrics—MMD, MELRu, MELRw, Wasserstein distance ($\mW$), and KL divergence ($\text{D}_{KL}$). The variants EDDIB+$\mW$, EDDIB+MMD, and EDDIB+MELRw denote the EDDIB employing $\mW$, MMD, MELRw, respectively, as described in \cref{alg:t1t2}.}
    \label{tab:comp}
\end{table}
In \cref{fig:LR_comp}, we compare the predictions of the translated LFLR data produced by various approaches. The NOT method lacks the desired fine scale details, and we believe this issue might be resolved by incorporating more sophisticated regularization; however, that is beyond the scope of this paper. Although the SDEdit method recovers some of these fine details, it does so at the expense of altering the large scale structure due to an inherent trade off. In contrast, the EDDIB predictions simultaneously preserve the large-scale structure and generate the desired fine-scale details.

\begin{figure}[ht]
    \centering
    \includegraphics[width=0.9\textwidth]{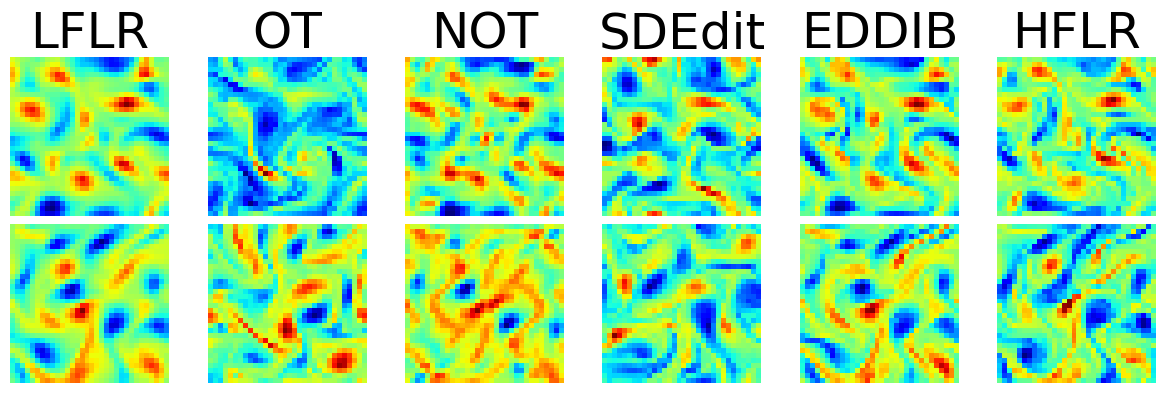}
    \caption{Comparison of the translated LFLR obtained from various methods. The first and the second rows presents two instances.}
    \label{fig:LR_comp}
\end{figure}

\textbf{Results of SR.}
\cref{fig:ns} presents a comparison of unpaired SR results from five baseline approaches alongside the reference HFHR data. The top and bottom rows display two instances of predictions from various baseline models and the reference. It is evident that both the Direct SR and NOT methods yield predictions lacking the desired fine-scale details. In contrast, the OT and SDEdit recover small-scale details similar to the reference, though they alter the large-scale structure. The EDDIB method produces predictions that preserve the large-scale structure while generating the desired high-frequency details, as illustrated in the bottom-right sub-figure. 

Additionally, we provide boxplots of various distance metrics between each prediction and the reference. The bottom-left sub-figure shows that the OT and SDEdit methods exhibit significantly higher RMSE and TVD values compared to the others, indicating that they fail to preserve large-scale structures. Meanwhile, for MELRu and MELRw, the NOT method yields larger values. In the bottom-right sub-figure, the log ratio of the energy spectra for each baseline is plotted relative to the reference, revealing that both SDEdit and EDDIB better align with the reference in the wavenumber domain. Notably, EDDIB achieves low RMSE and TVD (thus maintaining large-scale structures) while also recovering fine-scale details (reflected by small MELRu and MELRw values and a low log ratio of the energy spectra).
\begin{figure}[ht]
    \centering
    \includegraphics[width=0.9\textwidth]{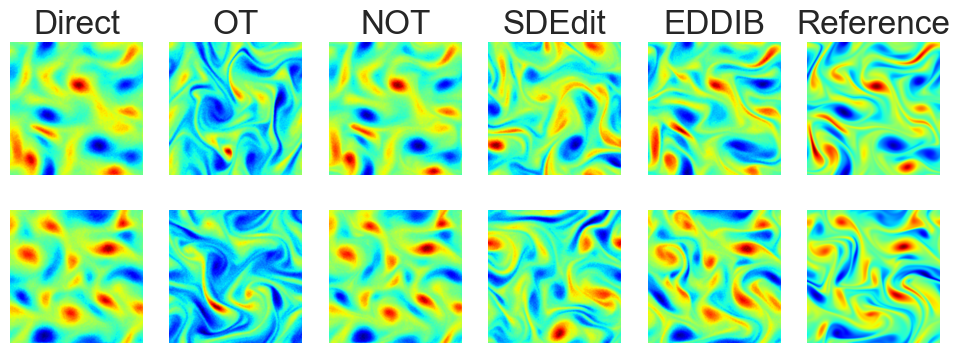}
    \includegraphics[width=0.45\textwidth]{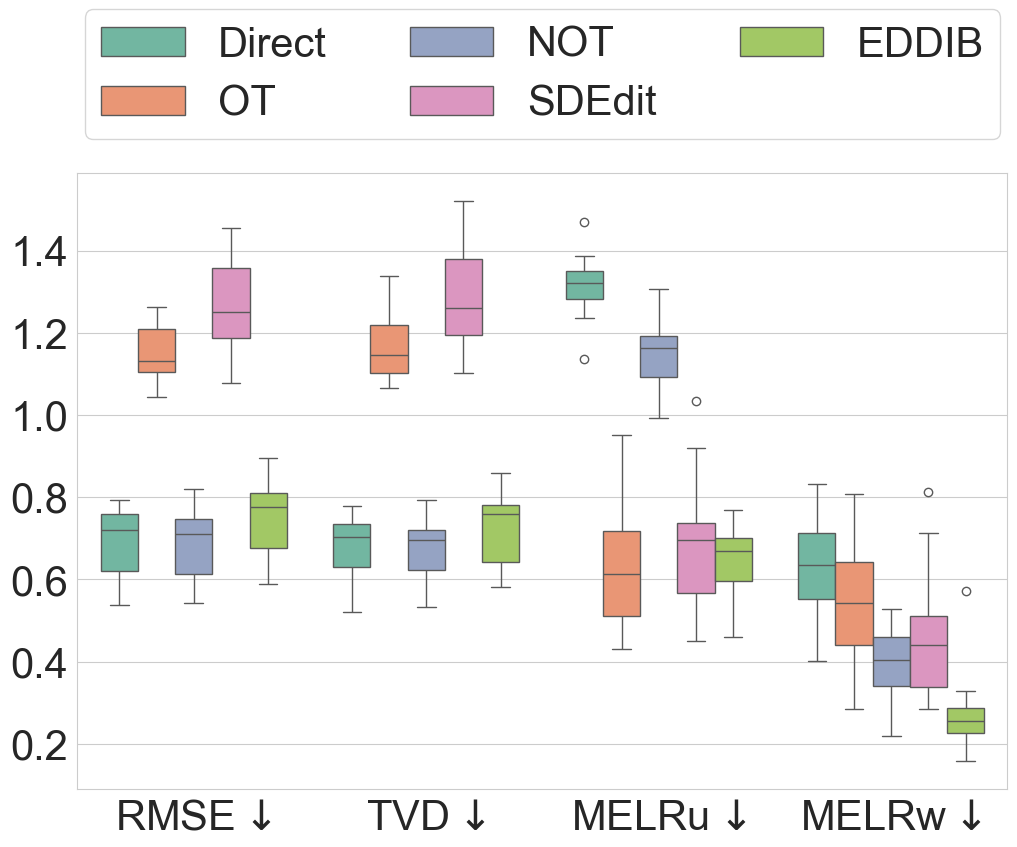}
    \includegraphics[width=0.45\textwidth]{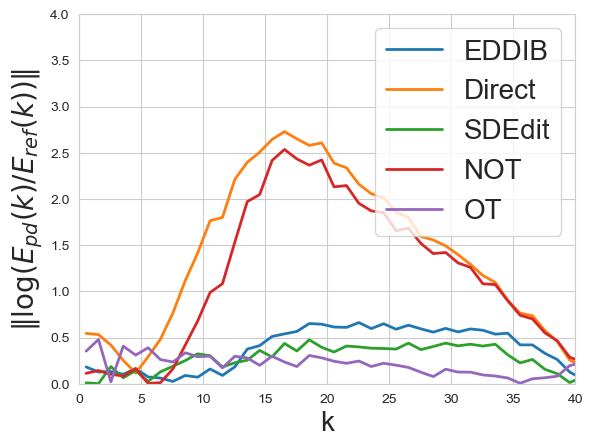}
    \caption{2D Navier–Stokes equation: SR results from five baseline methods compared to the reference. The top two rows show the predictions for two different LFLR data. The bottom-left panel displays the boxplots comparing four distance metrics between each prediction and the reference. The bottom-right plot displays the log ratio of the predicted energy spectrum relative to the reference, illustrating that EDDIB most effectively preserves both large-scale structures and fine-scale details.}
    \label{fig:ns}
\end{figure}

\subsection{2D Euler equation with shocks} In this part, we consider 2D Euler equations
\begin{equation}
    \begin{cases} 
    \rho_t + (\rho u)_x + (\rho v)_y = 0, \\
    (\rho u)_t + (\rho u^2 + p)_x + (\rho uv)_y = 0, \\
    (\rho v)_t + (\rho uv)_x + (\rho v^2 + p)_y = 0, \\
    E_t + (u (E + p) )_x + (v (E + p))_y = 0. \\
    \end{cases}
\end{equation}
Here, $\rho(x, y, t)$ is the density, $u(x,y,t)$ and $v(x,y,t)$ are the velocity field in $x$ and $y$ direction respectively, and $E(x,y,t)$ is the total energy. The spatial domain is $(x,y)\in [0,1]^2$ with periodic boundary condition. The initial condition is adapted from the 2D Riemann problem from \cite{liska2003comparison}, which is defined as 
\[
(\rho,u,v,p)(x, y, 0)=(\rho_0,u_0,v_0,p_0) + 0.15 \mathbf{z}
\]
where
\begin{equation}
(\rho_0,u_0,v_0,p_0) \;=\;
\begin{cases}
(0.5323,\,1.206,\,0,\,0.3), 
& \text{if } x \le 0.5,\;y \le 0.5,\\[6pt]
(1.5,\,0,\,0,\,1.5), 
& \text{if } x > 0.5,\;y \le 0.5,\\[6pt]
(0.138,\,1.206,\,1.206,\,0.029), 
& \text{if } x \le 0.5,\;y > 0.5,\\[6pt]
(0.5323,\,0,\,1.206,\,0.3), 
& \text{if } x > 0.5,\;y > 0.5.
\end{cases}
\end{equation}
and the vector $\mathbf{z}:=(z_1, z_2, z_3, z_4)$ whose components are i.i.d sampled from the standard Gaussian distribution i.e. $z_i \sim \mN(0,1)$ for $i=1,2,3,4$. To close the equations, we use the following equation of state for ideal gas with $\gamma=1.4$: 
\[
E = \frac{1}{2} \rho u^2 + \frac{p}{\gamma - 1}.
\]

\textbf{Results of unpaired SR.} In this example, we use $N=4000$ LFLR data and $M=4000$ HFHR data for training, and use $Q=100$ paired LFLR-HFHR data for testing. The results of the comparison between different approaches are presented in \cref{fig:euler}. From the top two rows, we observe that EDDIB not only preserves the large-scale shock profile and discontinuity location but also recovers fine-scale eddies. In contrast, Direct and NOT noticeably lack small-scale structures, while SDEdit and OT fail to maintain the large-scale shock profile. The bottom two subplots further confirm these observations. In the bottom-left subplot, EDDIB achieves the smallest MELRu and MELRw, demonstrating its effectiveness in reconstructing multi-scale shock wave structures. In the bottom-right subplot, although both EDDIB and SDEdit maintain relatively low spectral errors across most wavenumbers, the RMSE and TVD values in the bottom-left subplot reveal that SDEdit and OT have significantly larger errors than EDDIB—indicating their inability to preserve the large-scale structure.

\begin{figure}[htbp]
    \centering
    \includegraphics[width=0.9\textwidth]{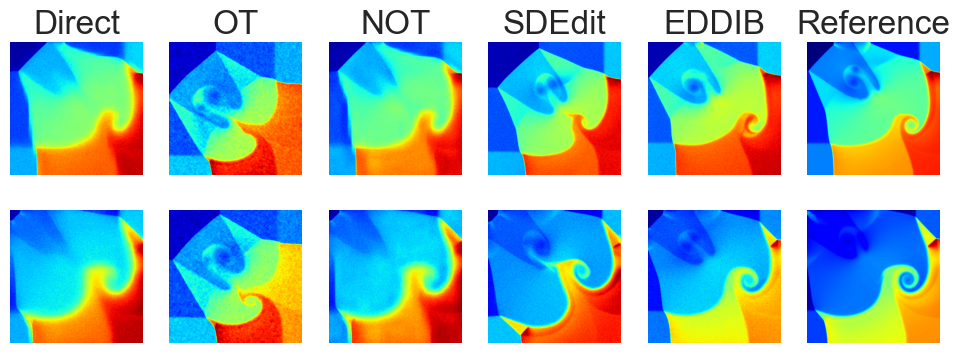}
    \includegraphics[width=0.45\textwidth]{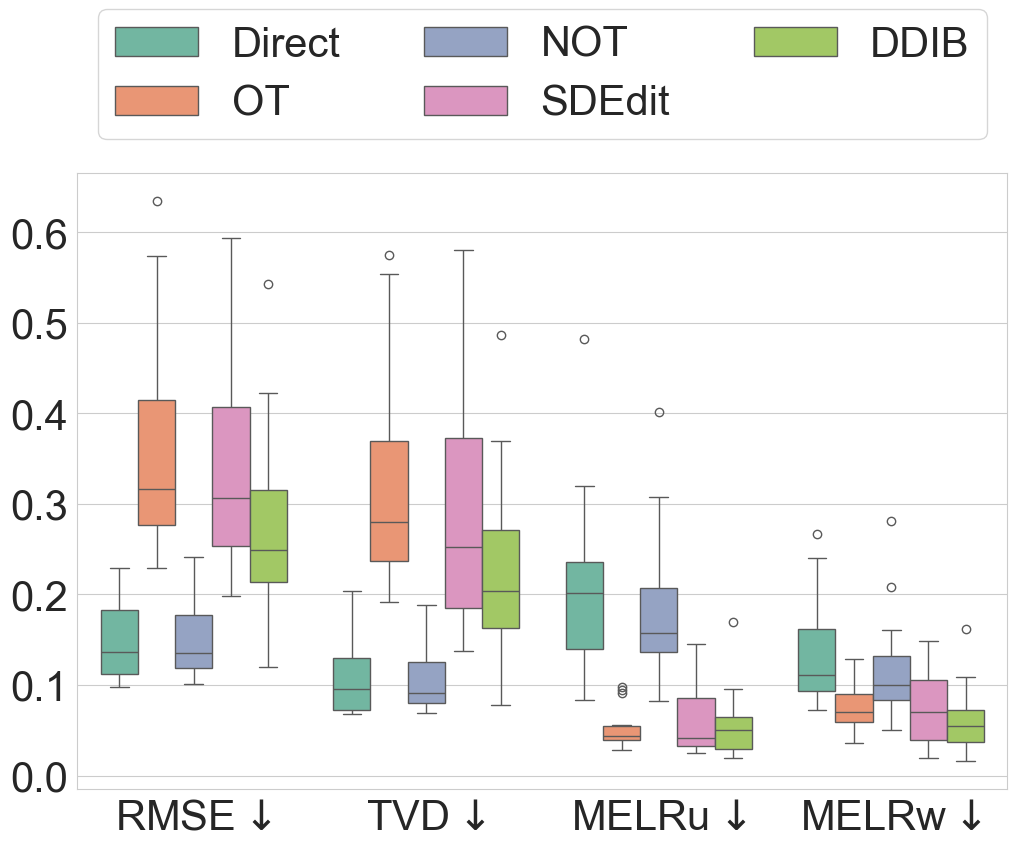}
    \includegraphics[width=0.45\textwidth]{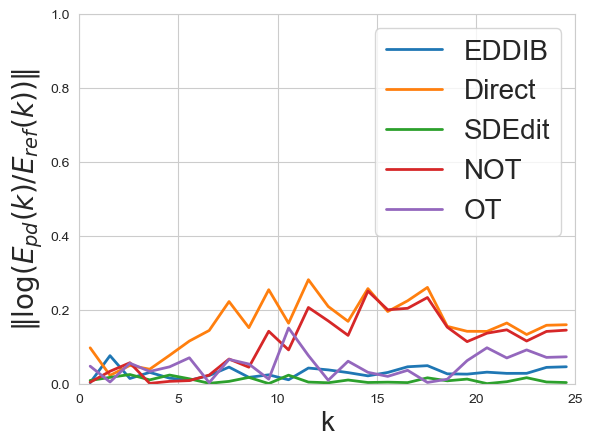}
    \caption{2D Euler equation: SR results from five baseline methods compared to the reference. The top two rows show the predictions for two different LFLR data. The bottom-left panel displays the boxplots comparing four distance metrics between each prediction and the reference. The bottom-right plot displays the log ratio of the predicted energy spectrum relative to the reference.}
    \label{fig:euler}
\end{figure}

\subsection{2D nonlinear water waves}\label{sec:wave} 
In this subsection, we focus on snapshot datasets for a nonlinear water wave system. A wave system can be described by the surface elevation $\eta(x, y, t)$ and the velocity potential $\phi(x, y, z, t)$. The evolution equations of the waves are given by \cite{zakharov1968stability}
\begin{equation}
\begin{cases}
\eta_t+ \phi^S_x \eta_x +\phi^S_y \eta_y -\left(1+\eta_x^2 +\eta_y^2\right) {[\phi_z]}_{z=\eta} = 0, \\
\phi^S_t+g\eta+\frac{1}{2} \left( {(\phi^S_x)^2 + (\phi^S_y)^2} \right) -\frac{1}{2} \left( {1+\eta_x^2 +\eta_y^2} \right) {[\phi^2_z]}_{z=\eta} = 0,
\end{cases}
\end{equation}
where $\phi^S(x,y,t)={[\phi(x,y,z,t)]}_{z=\eta}$ denotes the velocity potential at the wave surface $z=\eta(x, y)$ and $g$ is the gravitational acceleration. The above equations are solved by the high-order spectral method~\cite{dommermuth1987high}, a numerical scheme widely used for simulating ocean waves \cite{hao2020direct,xuan2024effect}. The spatial domain is $(x,y) \in [0, 100]^2\,\text{m}^2$. The initial condition is generated based on the JONSWAP spectrum~\cite{hasselmann1973}, a spectral energy distribution of realistic wind-generated ocean waves across different wavenumbers. The directional wave spectrum is given by
\begin{equation}\label{3-1}
\begin{aligned}
    S(k,\theta)=\frac{\alpha_p}{\sqrt{k^5g}}\exp \left[-\frac{5}{4}{\left(\frac{k}{k_p}\right)^{-2}}\right]\gamma^{\exp\left[-(\sqrt{k}-\sqrt{k_p})^2/(2\sigma^2k_p)\right]}\cos^2(\theta), \\
\end{aligned}
\end{equation}
where $\alpha_p=1.076\times 10^{-2}$ is the Phillips parameter, $k_p=0.25\,\text{m}^{-1}$ is the peak wavenumber, \(g=9.8 \text{ m\,s}^{-2}\), $\gamma=3.3$ is the peak enhancement factor, $\sigma=0.07$ for \(k\le k_p\) and \(\sigma=0.09\) for \(k> k_p\), and \(\theta\in[-\pi/2,\pi/2]\) is the wave direction.
The initial wave field is constructed as a superposition of linear wave components of varying wavelengths with amplitudes determined by the energy density \eqref{3-1}.
The phases of the wave components are randomly assigned to provide a realization of random waves. 

\textbf{Results of SR.} In this example, we use $N=4000$ LFLR data and $M=4000$ HFHR data for training, and use $Q=100$ paired LFLR-HFHR data for testing. The results of the comparison between different approaches are presented in \cref{fig:wave}. From the top two rows, we observe that EDDIB effectively retains the large-scale surface wave patterns while also reconstructing fine-scale roughness associated with short waves. In contrast, Direct and NOT exhibit a lack of small-scale features, and OT and SDEdit struggle to maintain the large-scale wave profiles. The bottom two plots reinforce these observations. In the bottom-left plot, EDDIB achieves the smallest MELRu and MELRw, indicating its ability to capture multi-scale wave structures. Meanwhile, altough the bottom-right plot shows that OT and SDEdit achieve similarly low spectral errors as EDDIB at most wavenumbers, the bottom-left plot reveals that OT and SDEdit have substantially higher RMSE and TVD than EDDIT, underscoring their difficulty in preserving large-scale structures.

\begin{figure}[htbp]
    \centering
    \includegraphics[width=0.9\textwidth]{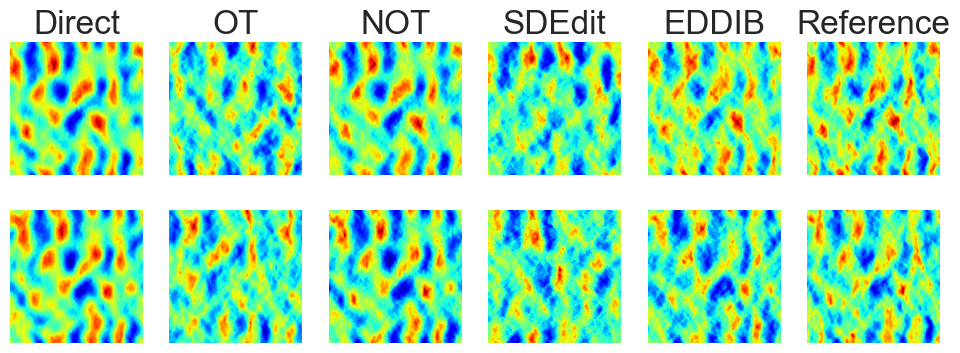}
    \includegraphics[width=0.45\textwidth]{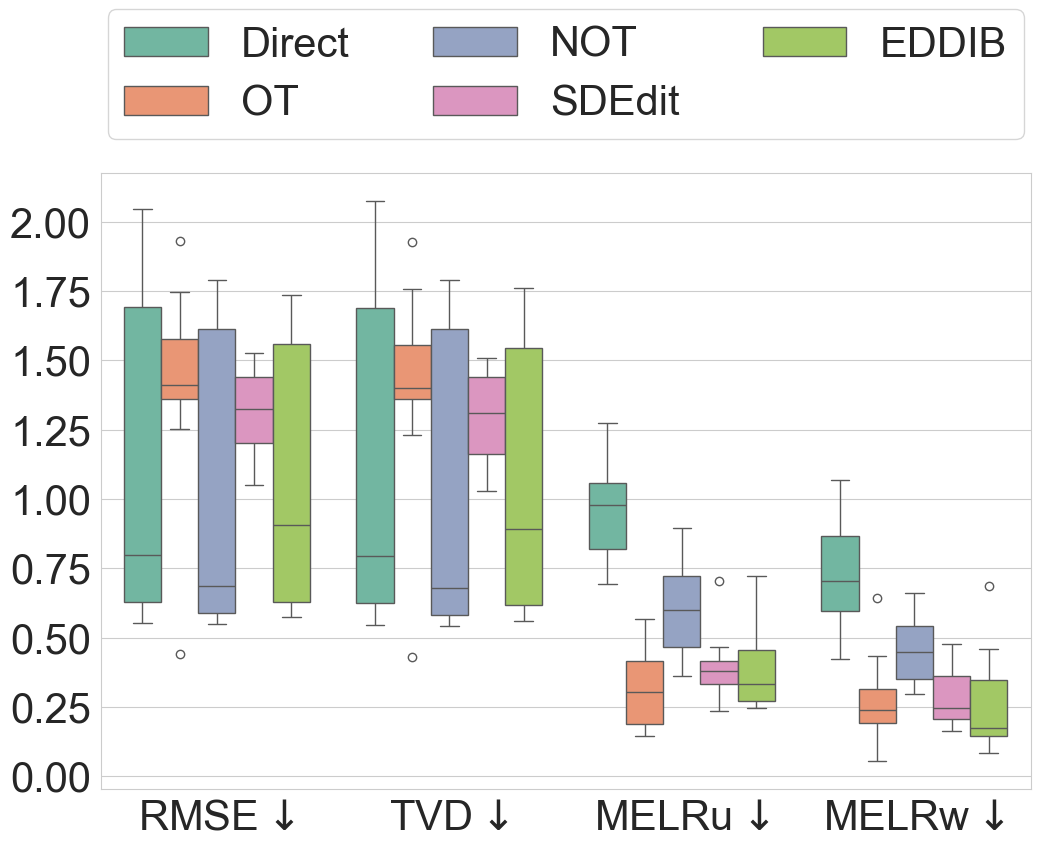}
    \includegraphics[width=0.45\textwidth]{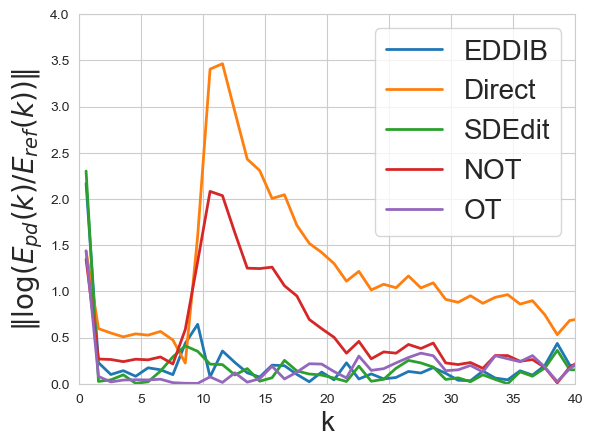}
    \caption{Surface elevation of nonlinear water waves: SR results from four baseline methods (Direct, OT, NOT, SDEdit, EDDIB) compared to the reference. The top two rows show the predictions for two different LFLR data. The bottom-left panel displays the boxplots comparing four distance metrics (RMSE, TVD, MELRu, MELRw) between each prediction and the reference. The bottom-right plot displays the log ratio of the predicted energy spectrum relative to the reference.}
    \label{fig:wave}
\end{figure}

\subsection{2D Navier-Stokes equation for trajectory data}
In this subsection, we consider the trajectory setting for the 2D Navier-Stokes equation \eqref{eqn:ns}. We set the number of snapshots in each trajectory to $n=5$ and the time step size to $\Delta t = 0.2$. The initial conditions are generated as described in \cref{sec:ns}. Based on previous numerical experiments, we observe that EDDIB effectively preserves large-scale structures while recovering the desired small-scale details. Consequently, we employ EDDIB for the translation task in the debiasing step across all three approaches for the trajectory problem, as detailed in \cref{sec:exp_setting}. To obtain the HFHR prediction at $T=1$, we consider three previously defined methods: Method I (snapshot-wise enhancement), Method II (HFLR dynamics learning), and Method III (HFHR dynamics learning), as introduced in \cref{sec:exp_setting}.

\textbf{Results of SR of trajectory data.} In this example, we use $N=1000$ LFLR trajectory data and $M=1000$ HFHR trajectory data for training, and use $Q=100$ paired LFLR-HFHR trajectory data for testing. We present the unpaired SR results at $T=1$ of three baseline methods for trajectory datasets in \cref{fig:ns_evo}. The top two rows show snapshots at $T=1$ for two different initial conditions. Among these three methods,  Method II appears to best capture both large-scale flow features and finer eddy structures. The bottom-left boxplot compares four error metrics (RMSE, TVD, MELRu, MELRw) across all test samples at $T=1$, where Method II achieves the best overall performance. In the bottom-right panel, the logarithmic ratio of the predicted energy spectrum to the reference solution shows that Method II remains closer to the reference across most wavenumbers.

Moreover, we present one example of the predictions at $t=0.2,0.4, 0.6,0.8, 1$ of three methods in \cref{fig:NS_evo_2}. Method I produces prediction at each snapshot with desired fine scale detail but the large scale flow pattern deviates from the reference solution along time. Method III performs well initially but deteriorates at later times because learning dynamics at a $256\times256$ resolution with only $M=1000$ HFHR trajectory datasets proves insufficient. In contrast, Method II benefits from learning dynamics at a $32\times32$ resolution, where $M=1000$ is adequate, leading to more accurate predictions throughout the entire trajectory.

\begin{figure}[htbp]
    \centering
    \includegraphics[width=0.6\textwidth]{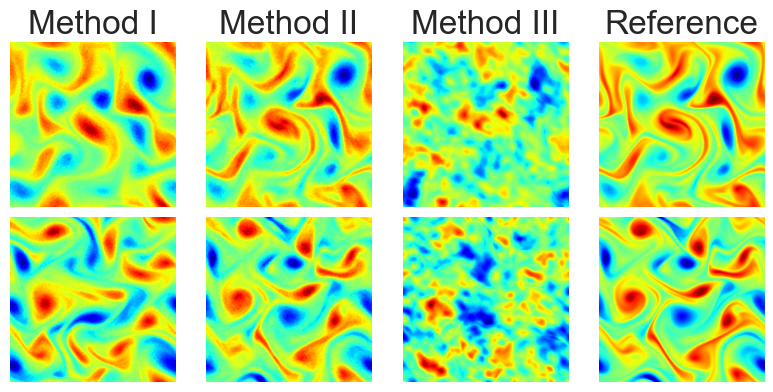}
    \includegraphics[width=0.4\textwidth]{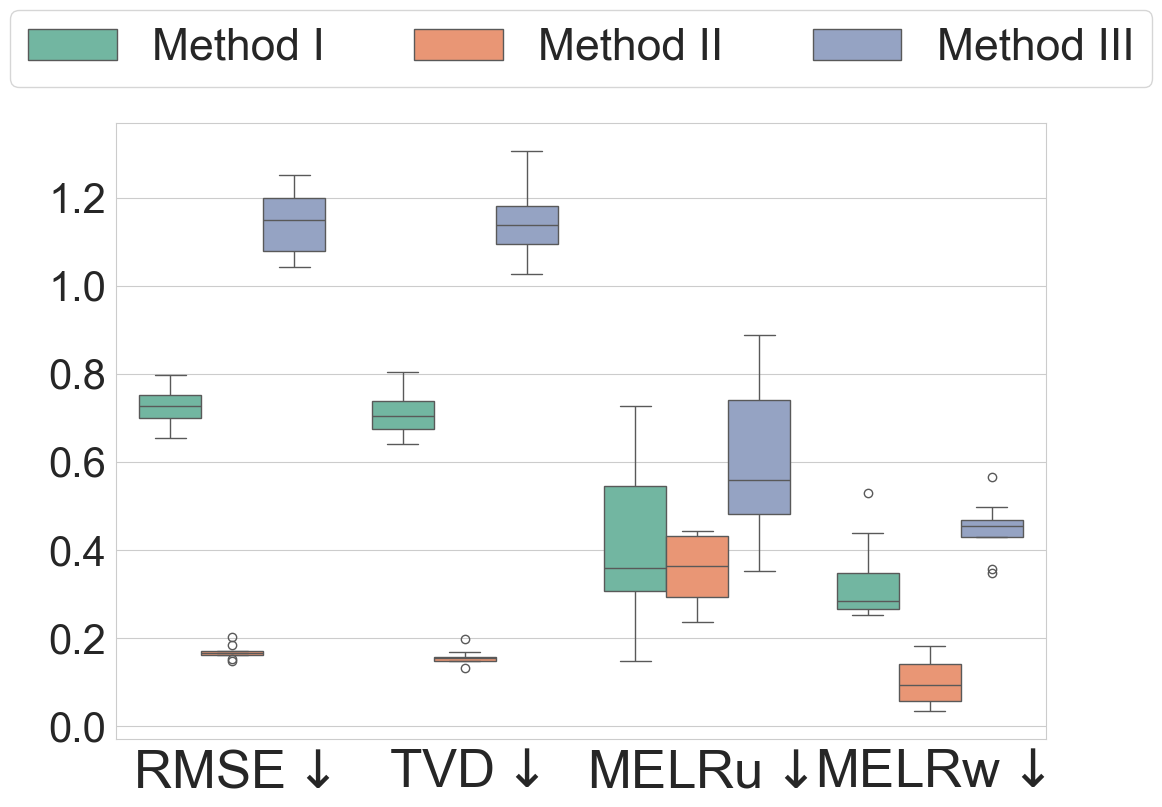}
    \includegraphics[width=0.4\textwidth]{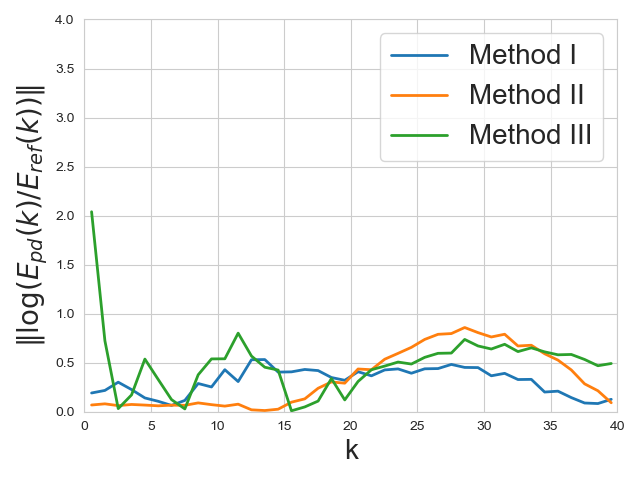}
    \caption{2D Navier–Stokes equation at $T=1$. SR results from three baseline methods compared to the reference. The top two rows show the predictions at final time $T=1$ for two different initial conditions. The bottom-left panel displays the boxplots comparing four distance metrics (RMSE, TVD, MELRu, MELRw) between each prediction at $T=1$ and the reference. The bottom-right plot displays the log ratio of the predicted energy spectrum relative to the reference.}
    \label{fig:ns_evo}
\end{figure}

\begin{figure}[htbp]
    \centering
    \includegraphics[width=0.9\textwidth]{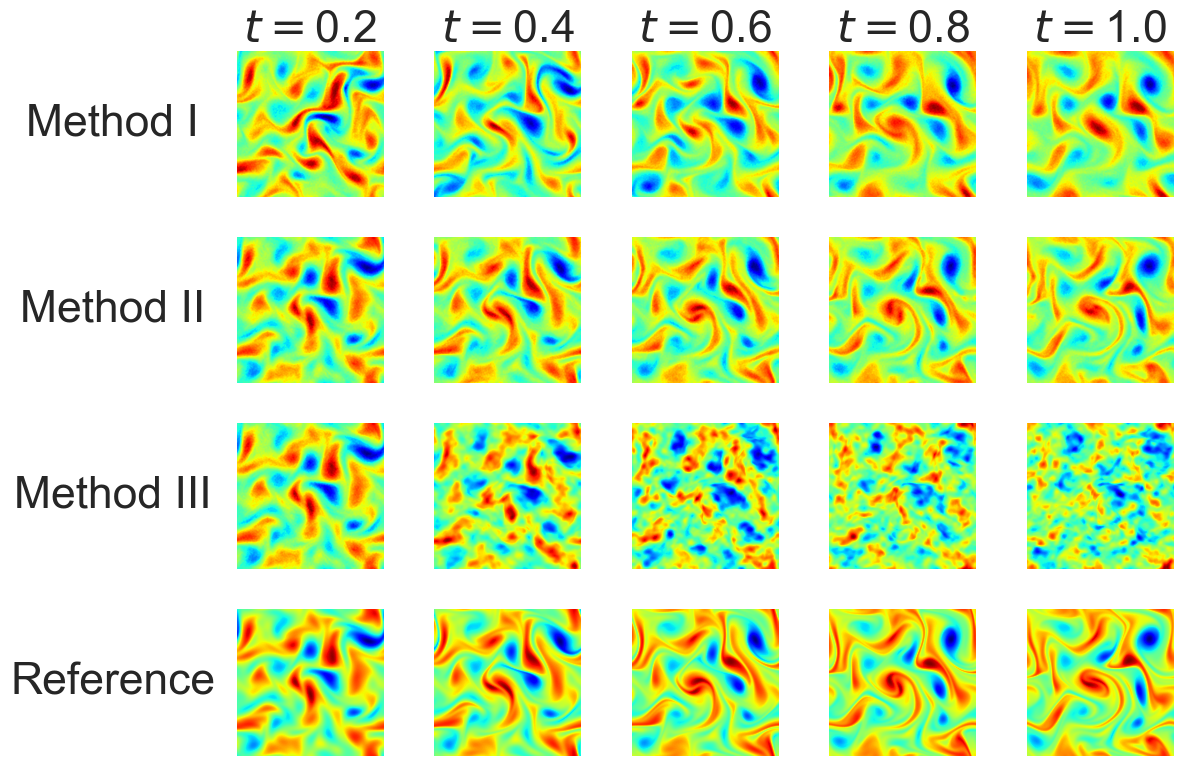}
    \caption{2D Navier–Stokes equation: prediction of trajectories obtained from various approaches.}
    \label{fig:NS_evo_2}
\end{figure}

\section*{Acknowledgment} 
We thank Zhong Yi Wan and Ricardo Baptista for their guidance on implementing the optimal transport methods detailed in their work \cite{wan2023debias}, which we used for comparison with our approach.

\bibliographystyle{siamplain}
\bibliography{references}

\appendix
\section{Notation Table}\label{sec:notation}
In this part, we summarize the notation used throughout the paper their description.
\begin{table}[H]
    \centering
    \begin{tabular}{c c}
    \hline
      Notation   &   Description \\
    \hline
    $u^h$&   High-fidelity high-resolution (HFHR) data \\
     $\tu$&  High-fidelity low-resolution (HFLR) data\\
     $u^l$& Low-fidelity low-resolution (LFLR) data\\
     $\bu^h$&  High-fidelity high-resolution (HFHR) evolutionary data \\
     $\tilde{\bu}^h$&  High-fidelity low-resolution (HFLR) evolutionary data\\
     $\bu^l$& Low-fidelity low-resolution (LFLR) evolutionary data\\
     $u^l(t)$ & Perturbed LFLR data using forward time SDE\\
     $\tu(t)$ & Perturbed HFLR data using forward time SDE\\
     $\beta(t)$ & Noise scheduling function\\
     $\alpha(t)$ & $\alpha(t) = e^{-\frac{1}{2}\int_0^t \beta(s) \rd s}$\\
     $\sigma(t)$ & $\sigma^2(t) = 1- \alpha^2(t)$ \\
     $S_{\zeta}(\tu(t), t)$ & Diffusion model used to approximate $p(\tu)$  \\
     $\mTl_\xi$ & Velocity field for PF ODE using $S_{\xi}$, see \eqref{eqn:velo1}  \\
     $\mTt_\zeta$ & Velocity field for PF ODE using $S_{\zeta}$, see \eqref{eqn:velo2}  \\
     $\uul(t)$ & Perturbed LFLR data using PF ODE with velocity field $\mTt_\xi$\\
     $\utu(t)$ & Perturbed HFLR data using PF ODE with velocity field $\mTt_\zeta$\\
     $\hat{u}^l(t_1, t_2)$ & Translated LFLR data using \eqref{eqn:recon} with $t_1, t_2$.\\
     $\hat{u}^{*,l}$ & \makecell{Translated LFLR data  with optimal $t^*_1, t^*_2$}\\
     $\mathcal{T}$ & The operator for translation $\mathcal{T}: u^l \rightarrow \hat{u}^{*,l}$ \\
     $S_{\eta_1}(u^h(t), \tilde{u}_{128}^h, t)$ & Conditional diffusion model used to approximate $p(u^h|\tilde{u}_{128}^h)$  \\
     $S_{\eta_2}(\tilde{u}_{128}^h(t), \tilde{u}_{64}^h, t)$ & Conditional diffusion model used to approximate $p(\tilde{u}_{128}^h|\tilde{u}_{64}^h)$  \\
     $S_{\eta_3}(\tilde{u}_{64}^h(t), \tu, t)$ & Conditional score model used to approximate $p(\tilde{u}_{64}^h|\tu)$  \\
     $\mS$ & The super-resolution operator: $\mS: \hat{u}^{*,l} \rightarrow \hat{u}^h$ \\
     $\tilde{\mG}^h_{\phi}$ & Neural operator for dynamics at HFLR level \\
     $\mG^h_{\psi}$ & Neural operator for dynamics at HFHR level \\
     $t_1$ & Forward perturbing time \\
     $t_2$ & Backward denoising time \\
     $N$ & Number of LFLR training samples in $\datal$\\
     $M$ & Number of HFHR training samples in $\datah$\\
     $Q$ & \makecell{Number of LFLR testing samples in $\dataltest$ \\ and testing HFHR samples in $\datahtest$} \\
     $n$ & Number of HFHR snapshots in each trajectory\\
     $N'$ & Number of LFLR trajectory training samples in $\edatal$ \\
     $M'$ & Number of HFHR trajectory training samples in $\edatah$ \\
     $Q'$ & \makecell{Number of testing trajectory samples \\ in $\edataltest$ (LFLR) and $\edatahtest$ (HFHR)} \\
     \hline
    \end{tabular}
    \caption{Table of Notations}
    \label{tab:notations}
\end{table}

\begin{table}[H]
    \centering
    \begin{tabular}{c c}
    \hline
      Metric   &   Description \\
    \hline
     TVD & Total Variation Distance \\
     RMSE & Relative root mean squared error \\
     MMD & Maximum Mean Discrepancy\\
     $\mW$ & Wasserstein-2 distance\\
     MELRu & Unweighted mean energy log ratio \\
     MELRw & Weighted mean energy log ratio \\
     \hline
    \end{tabular}
    \caption{Table of Metrics}
    \label{tab:metrics}
\end{table}

\section{Network architecture}\label{sec:archi_hyper}
In this part, we detail the network architectures and the associated hyperparameters used in each method.
\textbf{EDDIB.} We use a UNet architecture for all diffusion models without attention \cite{song2020score} for EDDIB. The noise scheduling function is defined as $\beta(t) = \beta_0 + (\beta_1 - \beta_0) t$ with $\beta_0=0.1, \beta_1=20$ and the dimension of Gaussian random feature embeddings used is set to 128. The remaining hyperparameters for the diffusion models are presented in Table~\ref{tab:hyper}. 
\begin{table}[H]
    \centering
    \begin{tabular}{|c|c|c|c|c|}
        \hline  
        & \makecell{$\mathcal{S}_{\xi}$ and $\tilde{\mathcal{S}}^h_{\zeta}$} & \makecell{$\mathcal{S}_{\eta_1}$} & \makecell{$\mathcal{S}_{\eta_2}$} & \makecell{$\mathcal{S}_{\eta_3}$}  \\
        \hline
        \hline
        Base Channel & 64 & 64 &64 &128 \\
        \hline
        \makecell{Down and Up \\ Channels Multipliers} & 1, 2, 4 & 1, 2, 4, 8 & 1, 2, 4, 8 & 1, 2, 4, 8 \\
        \hline
        Middle Channel & [256, 256] & [512, 512] & [512, 512] & [1024, 1024]  \\
        \hline
        \hline
        Batch Size & 64 & 64 & 64 & 32  \\
        \hline
        Learning Rate & 1e-3 & 5e-4 & 5e-4 & 5e-4 \\
        \hline
        Number of Training Epochs & 2000 & 2000 & 2000 & 2000 \\
        \hline
        \hline
        Number of Denoising Steps & - & 1000 & 1000 & 1000\\
        \hline
    \end{tabular}
    \caption{Hyperparameters of Diffusion Models}
    \label{tab:hyper}
\end{table}

\textbf{SDEdit.} For SDEdit, we require only one single diffusion model, $\tilde{\mathcal{S}}^h_{\zeta}$, trained on the HFLR dataset $\{ \tilde{u}^h_j \}_{j=1}^M$. The hyperparameter settings are identical to those used in EDDIB, with one additional parameter: $t_0$ (see Algorithm 1 in \cite{meng2021sdedit}). This parameter controls the trade-off between realism and faithfulness. In our context, that is to balance the preservation of large-scale structure against the recovery of desired fine-scale details. Here we use $t_0=0.5$.  

\textbf{NOT.} We adopt the ResNet architecture in \cite{liu2019wasserstein} for the potential $f$, which consists of 5 convolutional layers (first three with $5\times5$ kernels, followed by one $3\times3$ and one $1\times1$) with average pooling and ReLU activations. For the stochastic transport map $T(x,z)$ we use an FNO instead of the UNet since FNO better preserves large-scale structure. In our setup, the FNO has 6 layers, 16 modes, and 16 hidden dimensions. We set the learning rate to 1e-4, use a batch size of 64, perform 10 inner iterations ($k_T=10$) per outer iteration, run for a total of 2000 iterations, and use a regularization weight of $\gamma=0.1$ (see Algorithm 1 in \cite{korotin2022neural}).

\textbf{FNO.} For dynamics learning, we use the vanilla FNO \cite{li2020fourier} and the hyperparameters are provided in Table~\ref{tab:hyper_FNO}.
\begin{table}[H]
    \centering
    \begin{tabular}{|c|c|c|}
        \hline  
        &\makecell{$\tilde{\mG}_{\phi}^h$} & \makecell{$\mG_{\psi}^h$} \\
        \hline
        \hline
        Number of Channel (number of snapshots) & 5 & 5 \\
        \hline
        Number of Layers & 4 & 4 \\
        \hline
        Number of Modes & 12 & 32 \\
        \hline
        Hidden Dimensions & 64 & 64 \\
        \hline
        \hline
        Batch Size & 64 & 64  \\
        \hline
        Learning Rate & 1e-4 & 1e-4  \\
        \hline
        Number of Epochs & 500 & 500  \\
        \hline
    \end{tabular}
    \caption{Table of hyperparameter of FNO}
    \label{tab:hyper_FNO}
\end{table}

\section{Additional Algorithms}
In this section, we present two additional algorithms. \Cref{alg:uh} outlines the training procedure of the conditional diffusion models used in the SR step, while \cref{alg:fno} describes the training process of the FNO employed to model the dynamics of trajectory data.

\begin{algorithm}[H]
\caption{Conditional Diffusion Model for SR}
\label{alg:uh}
\begin{algorithmic}[1]
\REQUIRE Paired dataset $\mathcal{T} = \{ \tilde{u}_i^h, u^h_i \}_{i=1}^M$, noise scheduling function $\alpha(t), \sigma(t)$, batch size $B$ and max iteration $Iter$.
\STATE Initialize $k=0$.
\WHILE{$k<Iter$}
    \STATE Sample $\{ \tilde{u}_j^h, u_j^h \}_{j=1}^B \sim \mathcal{T}$ 
    \STATE $t \sim U[0, 1]$
    \STATE $\beps_j \sim \mN(0, \mI)$ for $j=1, \cdots, B$
    \STATE Compute $u_j^h(t) = \alpha(t) u_j^h + \sigma(t) \beps_j$
    \STATE Update $\eta$ using the Adam optimization algorithm to minimize the empirical loss:
    \[
    L(\eta)= \frac{1}{B} \sum_{j=1}^B \|  \beps_j + \sigma(t) S_{\eta}(u_j^h(t),  \tilde{u}_j^h, t) \|_2^2
    \]
    \STATE $k \gets k+1$
\ENDWHILE
\RETURN Diffusion model $S_{\eta}(u^h(t), \tilde{u}^h, t)$
\end{algorithmic}
\end{algorithm}

\begin{algorithm}[H]
\caption{FNO for dynamics}
\label{alg:fno}
\begin{algorithmic}[1]
\REQUIRE Paired dataset $\mathcal{T} = \{ u_{i, 0}, \bu_i \}_{i=1}^N$, batch size $B$ and max iteration $Iter$.
\STATE Initialize $k=0$.
\WHILE{$k<Iter$}
    \STATE Sample $\{ u_{j, 0}, \bu_j \}_{j=1}^B \sim \mathcal{T}$ 
    \STATE Update $\eta$ using the Adam optimization algorithm to minimize the empirical loss:
    \[
    L(\phi)= \frac{1}{B} \sum_{j=1}^B \| \mG_{\phi}(u_{j,0}) - \bu_j \|_2^2
    \]
    \STATE $k \gets k+1$
\ENDWHILE
\RETURN FNO $\mG_{\phi}$
\end{algorithmic}
\end{algorithm}

\section{Metrics}\label{sec:metrics}
In this part, we introduce the metrics used in this paper to quantify the distance between two empirical distributions. These metrics will be used for evaluating the quality of HFHR prediction dataset $\datahath$ obtained from our proposed method and the reference HFHR dataset $\{ u^h_i \}_{i=1}^Q$. First, let us consider the energy spectrum, which measures the energy of a function $f(\bx)$ in each Fourier mode and is defined as 
\[
E_f(k)  = \sum_{| \boldsymbol{k} | = k} \left| \sum_{n} f(\bx_n) \exp \left( -\mathbf{j} 2 \pi \boldsymbol{k} \cdot \bx_n / L \right) \right|^2
\]
where the $\bx_n$ denotes the grid points, the bold notation $\mathbf{j}$ denotes the imaginary number (not to be confused with the index subscript), and $k$ is the magnitude of the wave number $\boldsymbol{k}$. To quantify the overall alignment of two empirical distributions \( \{ \bu_p \}_{p=1}^P \) and \( \{ \bv_q \}_{q=1}^Q \), we denote the average energy spectrum of the two distributions by  
\[
E_{\bu}(k) = \frac{1}{N} \sum_{p} E_{\bu_p}(k), \quad E_{\bv}(k) = \frac{1}{M} \sum_{q} E_{\bv_q}(k),
\]
and consider the mean energy log ratio (MELR):  
\[
\text{MELR}(\bu, \bv)= \sum_k w_k \left| \log \left( \frac{E_{\bu}(k)}{E_{\bv}(k)} \right) \right|
\]  
where \( w_k \) is a user-specific weight function. If \( w_k=1 \) for all \( k \), we refer to it as the unweighted mean energy log ratio (MELRu). If  
\[
w_k=\frac{E_{\bv}(k)}{\sum_{q} E_{\bv}(q)},
\]  
we refer to it as the weighted mean energy log ratio (MELRw). And the definition of the Maximum Mean Discrepancy (MMD), Relative Mean Square Error (RMSE), Wasserstein-2 distance ($\mW$) and Total Variation Distance (TVD) are provided as follow.
\begin{itemize}
     \item[$\circ$] Total Variation Distance (TVD) (two distributions have same size $N=M$): 
    \begin{equation}\label{eqn:tvd}
        \text{TVD}(\bu,\bv) = \frac{1}{N} \sum_{i=1}^N \frac{\| \bu_i - \bv_i \|_1}{\| \bv_i \|_1}
    \end{equation}
	 \item[$\circ$] Relative root mean squared error (RMSE) (two distributions have same size $N=M$):
        \begin{equation}\label{eqn:RMSE}
            \text{RMSE}(\bu, \bv) = \frac{1}{N} \sum_{i=1}^N \frac{\| \bu_i - \bv_i \|_2}{\| \bv_i \|_2}
        \end{equation}
    \item[$\circ$] Maximum Mean Discrepancy (MMD):
    \begin{align}\label{eqn:mmd}
    \text{MMD}(\bu,\bv) =\  
    \frac{1}{N^2} \sum_{i=1}^N \sum_{j=1}^N k(\bu_i, \bu_j) 
    & + \frac{1}{M^2} \sum_{i=1}^M \sum_{j=1}^M k(\bv_i, \bv_j) \nonumber \\
    & - \frac{2}{NM} \sum_{i=1}^N \sum_{j=1}^M k(\bu_i, \bv_j),
    \end{align}
    where
    \[
    k(\bu, \bv) = \exp\left(-\frac{\|\bu - \bv\|^2}{2l^2}\right),
    \]
    is a Gaussian Kernel, and we let $l=0.01$ in this paper.
    
    \item[$\circ$] Wasserstein-2 distance (W2):
    \begin{equation}\label{eqn:w2}
    \text{W2}(\bu,\bv) = \sqrt{\min_{\pi}\sum_{i=1}^N \sum_{j=1}^M \pi_{ij} \| \bu_i - \bv_j \|_2^2},
    \end{equation}
    where $\pi_{ij}$ is the optimal transport plan that satisfies the constraints:
    \[
    \sum_{j=1}^M \pi_{ij} = \frac{1}{N}, ~ \forall i; \quad \text{and} \quad \sum_{i=1}^N \pi_{ij} = \frac{1}{M}, ~ \forall j.
    \]
    In practice, we use the POT package \cite{flamary2021pot} for computation.
   
\end{itemize}

\section{Proofs}\label{sec:proof}
In this section, we provide proofs of Proposition~\ref{prop:1} and Proposition~\ref{prop:2}. Let us start with two lemmas.

\begin{lemma}\label{lemma:1}
Let $p(x, t)$ and $q(x,t)$ be two probability density functions on $(x, t) \in \Omega \times [0, T]$, where $\Omega \subseteq \mathbb{R}^d$ is an open set. Assume $p(x, t)$ and $q(x,t)$ are smooth and fast decaying, that is
\begin{itemize}
    \item $p(\cdot, t),q(\cdot, t) \in C^1(\Omega)$ for all $t \in [0, 1]$,
    \item $\lim_{\|x\| \rightarrow \infty} p(x, t) = 0$, and $\lim_{\|x\| \rightarrow \infty} q(x, t) = 0$,
\end{itemize}
and they both satisfy the same continuity equation:
\begin{equation}
    \begin{cases}
        \frac{\partial p(x,t)}{\partial t} + \nabla \cdot (p(x,t)v(x,t)) = 0, \\
        p(x, 0) = p_0(x)
    \end{cases}
\end{equation}
and 
\begin{equation}
    \begin{cases}
        \frac{\partial q(x,t)}{\partial t} + \nabla \cdot (q(x,t)v(x,t)) = 0, \\
        q(x, 0) = q_0(x)
    \end{cases}
\end{equation}
where $v(x, t)$ is the velocity field. Then we have
\[
\frac{\rd }{\rd t} D_{KL}(p(x, t) \| q(x, t)) = 0.
\]
\end{lemma}
\begin{proof}

\begin{align*}
\frac{\rd }{\rd t} D_{KL}&(p(x, t) \| q(x, t)) \\
&= \frac{\rd }{\rd t} \int p(x, t) \ln \frac{p(x, t)}{q(x, t)} \rd x \\
&= \int \partialt p(x, t) \ln \frac{p(x, t)}{q(x, t)} \rd x 
   + \int \partialt p(x, t) \rd x 
   - \int \partialt q(x, t) \frac{p(x, t)}{q(x, t)} \rd x \\
&= -\int \nabla \cdot (p v) \ln \frac{p}{q} \rd x 
   + 0 
   + \int \nabla \cdot (q v) \frac{p}{q} \rd x \\
&= \int p v \cdot \nabla \left( \ln \frac{p}{q} \right) \rd x 
   - \int q v \cdot \nabla \left( \frac{p}{q} \right) \rd x \\
&= \int p v \cdot \nabla \left( \ln \frac{p}{q} \right) \rd x 
   - \int p v \cdot \nabla \left( \ln \frac{p}{q} \right) \rd x \\
&= 0.
\end{align*}
\end{proof}

\begin{lemma}\label{lemma:2}
Let $p(x, t)$ and $q(x,t)$ be two probability density functions on $(x, t) \in \Omega \times [0, T]$, where $\Omega \subseteq \mathbb{R}^d$ is an open set. Assume $p(x, t)$ and $q(x,t)$ are smooth and fast decaying, that is
\begin{itemize}
    \item $p(\cdot, t),q(\cdot, t) \in C^1(\Omega)$ for all $t \in [0, 1]$,
    \item $\lim_{\|x\| \rightarrow \infty} p(x, t) = 0$, and $\lim_{\|x\| \rightarrow \infty} q(x, t) = 0$,
\end{itemize}
and they both satisfy the same continuity equation:
\begin{equation}
    \begin{cases}
        \frac{\partial p(x,t)}{\partial t} + \nabla \cdot (p(x,t)v(x,t)) = 0, \\
        p(x, 0) = p_0(x)
    \end{cases}
\end{equation}
and 
\begin{equation}
    \begin{cases}
        \frac{\partial q(x,t)}{\partial t} + \nabla \cdot (q(x,t)v(x,t)) = 0, \\
        q(x, 0) = q_0(x)
    \end{cases}
\end{equation}
where $v(x, t)$ is the velocity field, which is $L-$Lipchitz in $x$, we then have 
\[
\mW(p(x, t), q(x, t)) \leq e^{Lt} \mW(p_0(x), q_0(x)).
\]
\end{lemma}
\begin{proof}
    Define a flow map 
    \[
    \Phi_t(x) = X(x, t),
    \]
    where 
    \begin{equation}
        \begin{cases}
            \difft X(x, t) = v(X(x, t), t), \\
            X(x, 0) = x
        \end{cases}
    \end{equation}
    Since the velocity field $v(x, t)$ is $L$-Lipschitz in $x$, by the Gr\"onwall’s inequality, we know that the flow map $\Phi_t$ is $e^{Lt}$-Lipschitz (see \cite{santambrogio2015optimal}), we then have 
    \[
    \mW(p(x, t), q(x, t)) = \mW({\Phi_t}_{\#}p_0(x), {\Phi_t}_{\#}q_0(x)) \leq e^{Lt} \mW(p_0(x), q_0(x)).
    \]
\end{proof}

\begin{proposition}
For any $t_1, t_2 \in (0, 1]$, the KL divergence between the translated distribution $p(\hat{u}(t_1, t_2))$ and the target distribution $p(\tu)$ equals the KL divergence between two perturbed distribution $p(\uul(t_1))$ and $p(\utu(t_2))$, that is
\[
D_{KL}(p(\hat{u}(t_1, t_2)) \| p(\tu)) = D_{KL}(p(\uul(t_1))\|p(\utu(t_2))).
\]
\end{proposition}
\begin{proof}
    The PF ODE
    \begin{equation}
        \begin{cases}
        \frac{ \rd \bx}{\rd t} = - \frac{1}{2} \beta(t) \bx(t) - \frac{1}{2} \beta(t) \tilde{S}^l_\zeta (\bx(t), t), \\
        \bx(0) = \bx_0 \sim p_0(\bx).
    \end{cases}
    \end{equation}
    corresponds to the following continuity equation \cite{santambrogio2015optimal}: 
    \begin{equation}\label{eqn:cont}
        \begin{cases}
        \frac{\partial p(\bx, t)}{\partial t} + \nabla \cdot (p(\bx, t) v(\bx, t)) = 0, \\
        p(\bx, 0) = p_0(\bx).
    \end{cases}
    \end{equation}
    where the velocity field is $v(\bx, t) = \frac{1}{2} \beta(t) \bx(t) - \frac{1}{2} \beta(t) \tilde{S}^l_\zeta (\bx(t), t)$. The distribution $p(\uul(t_1))$ can be obtained by running the continuity equation \eqref{eqn:cont} with initial condition $p_0(\bx) = p(\hat{u}(t_1, t_2))$ for $t_2$. Similarly the distribution $p(\utu(t_2))$ can be obtained by running the continuity equation \eqref{eqn:cont} with initial condition $p_0(\bx) = p(\tu)$ for $t_2$. By applying the Lemma~\ref{lemma:1}, we have
    \[
    D_{KL}(p(\hat{u}(t_1, t_2)) \| p(\tu)) = D_{KL}(p(\uul(t_1))\|p(\utu(t_2))).
    \]
\end{proof}

\begin{proposition}
Assume $\mTt_\zeta(\tu(t), t)$ is $L_s$-Lipchitz continuous in $\tu(t)$. Then,  for any $t_1, t_2 \in (0, 1]$, the $\mW$ distance between the translated distribution $p(\hat{u}(t_1, t_2))$ and the target distribution $p(\tu)$ is upper bounded by the $\mW$ distance between two perturbed distribution $p(\uul(t_1))$ and $p(\utu(t_2))$
\[
\mW(p(\hat{u}(t_1, t_2)), p(\tu)) \leq e^{L_s  t_2} \mW(p(\uul(t_1)), p(\utu(t_2))).
\]
\end{proposition}

\begin{proof}
    The PF ODE
    \begin{equation}
        \begin{cases}
        \frac{ \rd \bx}{\rd t} = \mTt_\zeta(\bx(t), t), \\
        \bx(0) = \bx_0 \sim p_0(\bx).
    \end{cases}
    \end{equation}
    corresponds to the following continuity equation (see \cite{santambrogio2015optimal}): 
    \begin{equation}
        \begin{cases}
        \frac{\partial p(\bx, t)}{\partial t} + \nabla \cdot (p(\bx, t) v(\bx, t)) = 0, \\
        p(\bx, 0) = p_0(\bx).
    \end{cases}
    \end{equation} 
    where the velocity field is $v(\bx, t) = \mTt_\zeta(\bx, t)$ is $L_s$-Lipschitz. The distribution $p(\uul(t_1))$ can be obtained by running the continuity equation \eqref{eqn:cont} with initial condition $p_0(\bx) = p(\hat{u}(t_1, t_2))$ for $t_2$. Similarly the distribution $p(\utu(t_2))$ can be obtained by running the continuity equation \eqref{eqn:cont} with initial condition $p_0(\bx) = p(\tu)$ for $t_2$. By applying the Lemma~\ref{lemma:2}, we have
    \[
\mW(p(\hat{u}(t_1, t_2)), p(\tu)) \leq e^{L_s  t_2} \mW(p(\uul(t_1)), p(\utu(t_2))).
\]
\end{proof}

\end{document}


\maketitle


\section{Notation Table}\label{sec:notation}
\begin{table}[ht]
    \centering
    \begin{tabular}{c c}
    \hline
      Notation   &   Description \\
    \hline
    $u^h$&   High fidelity high resolution (HFHR) data \\
     $\tu$&  Low resolution version of high fidelity (HFLR) data\\
     $u^l$& Low fidelity low resolution (LFLR) data\\
     $\bu^h$&   High fidelity high resolution (HFHR) evolutionary data \\
     $\tilde{\bu}^h$&  Low resolution version of high fidelity (HFLR) evolutionary data\\
     $\bu^l$& Low fidelity low resolution (LFLR) evolutionary data\\
     $u^l(t)$ & Perturbed LFLR data using forward time SDE\\
     $\tu(t)$ & Perturbed HFLR data using forward time SDE\\
     $\beta(t)$ & Noise scheduling function\\
     $\alpha(t)$ & $\alpha(t) = e^{-\frac{1}{2}\int_0^t \beta(s) \rd s}$\\
     $\sigma(t)$ & $\sigma^2(t) = 1- \alpha^2(t)$ \\
     $S_{\zeta}(\tu(t), t)$ & Diffusion model used to approximate $p(\tu)$  \\
     $\mTl_\xi$ & Velocity field for PF ODE using $S_{\xi}$, see \eqref{eqn:velo1}  \\
     $\mTt_\zeta$ & Velocity field for PF ODE using $S_{\zeta}$, see \eqref{eqn:velo2}  \\
     $\uul(t)$ & Perturbed LFLR data using PF ODE with velocity field $\mTt_\xi$\\
     $\utu(t)$ & Perturbed HFLR data using PF ODE with velocity field $\mTt_\zeta$\\
     $\hat{u}^l(t_1, t_2)$ & Translated LFLR data using \eqref{eqn:recon} with $t_1, t_2$.\\
     $\hat{u}^{*,l}$ & Translated LFLR data using \eqref{eqn:recon} with optimal $t^*_1, t^*_2$ selected from Algorithm~\ref{alg:t1t2}.\\
     $\mathcal{T}$ & The operator for translation $\mathcal{T}: u^l \rightarrow \hat{u}^{*,l}$ \\
     $S_{\eta_1}(u^h(t), \tilde{u}_{128}^h, t)$ & Conditional diffusion model used to approximate $p(u^h|\tilde{u}_{128}^h)$  \\
     $S_{\eta_2}(\tilde{u}_{128}^h(t), \tilde{u}_{64}^h, t)$ & Conditional diffusion model used to approximate $p(\tilde{u}_{128}^h|\tilde{u}_{64}^h)$  \\
     $S_{\eta_3}(\tilde{u}_{64}^h(t), \tu, t)$ & Conditional score model used to approximate $p(\tilde{u}_{64}^h|\tu)$  \\
     $\mS$ & The super-resolution operator: $\mS: \hat{u}^{*,l} \rightarrow \hat{u}^h$ \\
     $\tilde{\mG}^h_{\phi}$ & Neural operator for dynamics at HFLR level \\
     $\mG^h_{\psi}$ & Neural operator for dynamics at HFHR level \\
     $t_1$ & Forward perturbing time \\
     $t_2$ & Backward denoising time \\
     $N$ & Number of LFLR training samples in $\datal$\\
     $M$ & Number of HFHR training samples in $\datah$\\
     $Q$ & Number of LFLR testing samples in $\dataltest$ and testing HFHR samples in $\datahtest$ \\
     $n$ & Number of HFHR snapshots in each trajectory\\
     $N'$ & Number of LFLR trajectory training samples in $\edatal$ \\
     $M'$ & Number of HFHR trajectory training samples in $\edatah$ \\
     $Q'$ & Number of LFLR trajectory testing samples in $\edataltest$ and testing HFHR samples in $\edatahtest$ \\
     TVD & Total Variation Distance \\
     RMSE & Relative root mean squared error \\
     MMD & Maximum Mean Discrepancy\\
     $\mW$ & Wasserstein-2 distance\\
     MELRu & Unweighted mean energy log ratio \\
     MELRw & Weighted mean energy log ratio \\
     \hline
    \end{tabular}
    \caption{Table of Notations}
    \label{tab:notations}
\end{table}

\section{Network architecture}\label{sec:archi_hyper}

\textbf{EDDIB.} We use a UNet architecture for all diffusion models without attention \cite{song2020score} for EDDIB. The noise scheduling function is defined as $\beta(t) = \beta_0 + (\beta_1 - \beta_0) t$ with $\beta_0=0.1, \beta_1=20$ and the dimension of Gaussian random feature embeddings used is set to 128. The remaining hyperparameters for the diffusion models are presented in Table~\ref{tab:hyper}. 
\begin{table}[ht]
    \centering
    \begin{tabular}{|c|c|c|c|c|}
        \hline  
        & \makecell{$\mathcal{S}_{\xi}$ and $\tilde{\mathcal{S}}^h_{\zeta}$} & \makecell{$\mathcal{S}_{\eta_1}$} & \makecell{$\mathcal{S}_{\eta_2}$} & \makecell{$\mathcal{S}_{\eta_3}$}  \\
        \hline
        \hline
        Base Channel & 64 & 64 &64 &128 \\
        \hline
        \makecell{Down and Up \\ Channels Multipliers} & 1, 2, 4 & 1, 2, 4, 8 & 1, 2, 4, 8 & 1, 2, 4, 8 \\
        \hline
        Middle Channel & [256, 256] & [512, 512] & [512, 512] & [1024, 1024]  \\
        \hline
        \hline
        Batch Size & 64 & 64 & 64 & 32  \\
        \hline
        Learning Rate & 1e-3 & 5e-4 & 5e-4 & 5e-4 \\
        \hline
        Number of Training Epochs & 2000 & 2000 & 2000 & 2000 \\
        \hline
        \hline
        Number of Denoising Steps & - & 1000 & 1000 & 1000\\
        \hline
    \end{tabular}
    \caption{Hyperparameters of Diffusion Models}
    \label{tab:hyper}
\end{table}

\textbf{SDEdit.} For SDEdit, we require only one single diffusion model, $\tilde{\mathcal{S}}^h_{\zeta}$, trained on the HFLR dataset $\{ \tilde{u}^h_j \}_{j=1}^M$. The hyperparameter settings are identical to those used in EDDIB, with one additional parameter: $t_0$ (see Algorithm 1 in \cite{meng2021sdedit}). This parameter controls the trade-off between realism and faithfulness. In our context, that is to balance the preservation of large-scale structure against the recovery of desired fine-scale details. Here we use $t_0=0.5$.  

\textbf{NOT.} We adopt the ResNet architecture in \cite{liu2019wasserstein} for the potential $f$, which consists of 5 convolutional layers (first three with $5\times5$ kernels, followed by one $3\times3$ and one $1\times1$) with average pooling and ReLU activations. For the stochastic transport map $T(x,z)$ we use an FNO instead of the UNet since FNO better preserves large-scale structure. In our setup, the FNO has 6 layers, 16 modes, and 16 hidden dimensions. We set the learning rate to 1e-4, use a batch size of 64, perform 10 inner iterations ($k_T=10$) per outer iteration, run for a total of 2000 iterations, and use a regularization weight of $\gamma=0.1$ (see Algorithm 1 in \cite{korotin2022neural}).

\textbf{FNO.} For dynamics learning, we use the vanilla FNO \cite{li2020fourier} and the hyperparameters are provided in Table~\ref{tab:hyper_FNO}.
\begin{table}[ht]
    \centering
    \begin{tabular}{|c|c|c|}
        \hline  
        &\makecell{$\tilde{\mG}_{\phi}^h$} & \makecell{$\mG_{\psi}^h$} \\
        \hline
        \hline
        Number of Channel (number of snapshots) & 5 & 5 \\
        \hline
        Number of Layers & 4 & 4 \\
        \hline
        Number of Modes & 12 & 32 \\
        \hline
        Hidden Dimensions & 64 & 64 \\
        \hline
        \hline
        Batch Size & 64 & 64  \\
        \hline
        Learning Rate & 1e-4 & 1e-4  \\
        \hline
        Number of Epochs & 500 & 500  \\
        \hline
    \end{tabular}
    \caption{Table of hyperparameter of FNO}
    \label{tab:hyper_FNO}
\end{table}

\section{Additional Algorithms}
\begin{algorithm}[ht]
\caption{FNO for dynamics}
\label{alg:fno}
\begin{algorithmic}[1]
\REQUIRE Paired dataset $\mathcal{T} = \{ u_{i, 0}, \bu_i \}_{i=1}^N$, batch size $B$ and max iteration $Iter$.
\STATE Initialize $k=0$.
\WHILE{$k<Iter$}
    \STATE Sample $\{ u_{j, 0}, \bu_j \}_{j=1}^B \sim \mathcal{T}$ 
    \STATE Update $\eta$ using the Adam optimization algorithm to minimize the empirical loss:
    \[
    L(\phi)= \frac{1}{B} \sum_{j=1}^B \| \mG_{\phi}(u_{j,0}) - \bu_j \|_2^2
    \]
    \STATE $k \gets k+1$
\ENDWHILE
\RETURN FNO $\mG_{\phi}$
\end{algorithmic}
\end{algorithm}

\begin{algorithm}[htbp]
\caption{Conditional Diffusion Model for SR}
\label{alg:uh}
\begin{algorithmic}[1]
\REQUIRE Paired dataset $\mathcal{T} = \{ \tilde{u}_i^h, u^h_i \}_{i=1}^M$, noise scheduling function $\alpha(t), \sigma(t)$, batch size $B$ and max iteration $Iter$.
\STATE Initialize $k=0$.
\WHILE{$k<Iter$}
    \STATE Sample $\{ \tilde{u}_j^h, u_j^h \}_{j=1}^B \sim \mathcal{T}$ 
    \STATE $t \sim U[0, 1]$
    \STATE $\beps_j \sim \mN(0, \mI)$ for $j=1, \cdots, B$
    \STATE Compute $u_j^h(t) = \alpha(t) u_j^h + \sigma(t) \beps_j$
    \STATE Update $\eta$ using the Adam optimization algorithm to minimize the empirical loss:
    \[
    L(\eta)= \frac{1}{B} \sum_{j=1}^B \|  \beps_j + \sigma(t) S_{\eta}(u_j^h(t),  \tilde{u}_j^h, t) \|_2^2
    \]
    \STATE $k \gets k+1$
\ENDWHILE
\RETURN Diffusion model $S_{\eta}(u^h(t), \tilde{u}^h, t)$
\end{algorithmic}
\end{algorithm}

\section{Metrics}\label{sec:metrics}
In this part, we introduce the metrics used in this paper to quantify the distance between two empirical distributions. These metrics will be used for evaluating the quality of HFHR prediction dataset $\datahath$ obtained from our proposed method and the reference HFHR dataset $\{ u^h_i \}_{i=1}^Q$. First, let us consider the energy spectrum, which measures the energy of a function $f(\bx)$ in each Fourier mode and is defined as 
\[
E_f(k)  = \sum_{| \boldsymbol{k} | = k} \left| \sum_{n} f(\bx_n) \exp \left( -\mathbf{j} 2 \pi \boldsymbol{k} \cdot \bx_n / L \right) \right|^2
\]
where the $\bx_n$ denotes the grid points, the bold notation $\mathbf{j}$ denotes the imaginary number (not to be confused with the index subscript), and $k$ is the magnitude of the wave number $\boldsymbol{k}$. To quantify the overall alignment of two empirical distributions \( \{ \bu_p \}_{p=1}^P \) and \( \{ \bv_q \}_{q=1}^Q \), we denote the average energy spectrum of the two distributions by  
\[
E_{\bu}(k) = \frac{1}{N} \sum_{p} E_{\bu_p}(k), \quad E_{\bv}(k) = \frac{1}{M} \sum_{q} E_{\bv_q}(k),
\]
and consider the mean energy log ratio (MELR):  
\[
\text{MELR}(\bu, \bv)= \sum_k w_k \left| \log \left( \frac{E_{\bu}(k)}{E_{\bv}(k)} \right) \right|
\]  
where \( w_k \) is a user-specific weight function. If \( w_k=1 \) for all \( k \), we refer to it as the unweighted mean energy log ratio (MELRu). If  
\[
w_k=\frac{E_{\bv}(k)}{\sum_{q} E_{\bv}(q)},
\]  
we refer to it as the weighted mean energy log ratio (MELRw). And the definition of the Maximum Mean Discrepancy (MMD), Relative Mean Square Error (RMSE), Wasserstein-2 distance ($\mW$) and Total Variation Distance (TVD) are provided as follow.
\begin{itemize}
     \item[$\circ$] Total Variation Distance (TVD) (two distributions have same size $N=M$): 
    \begin{equation}\label{eqn:tvd}
        \text{TVD}(\bu,\bv) = \frac{1}{N} \sum_{i=1}^N \frac{\| \bu_i - \bv_i \|_1}{\| \bv_i \|_1}
    \end{equation}
	 \item[$\circ$] Relative root mean squared error (RMSE) (two distributions have same size $N=M$):
        \begin{equation}\label{eqn:RMSE}
            \text{RMSE}(\bu, \bv) = \frac{1}{N} \sum_{i=1}^N \frac{\| \bu_i - \bv_i \|_2}{\| \bv_i \|_2}
        \end{equation}
    \item[$\circ$] Maximum Mean Discrepancy (MMD):
    \begin{equation}\label{eqn:mmd}
    \text{MMD}(\bu,\bv) = 
    \frac{1}{N^2} \sum_{i=1}^N \sum_{j=1}^N k(\bu_i, \bu_j) + \frac{1}{M^2} \sum_{i=1}^M \sum_{j=1}^M k(\bv_i, \bv_j) - \frac{2}{NM} \sum_{i=1}^N \sum_{j=1}^M k(\bu_i, \bv_j)
    \end{equation}
    where
    \[
    k(\bu, \bv) = \exp\left(-\frac{\|\bu - \bv\|^2}{2l^2}\right),
    \]
    is a Gaussian Kernel, and we let $l=0.01$ in this paper.
    
    \item[$\circ$] Wasserstein-2 distance (W2):
    \begin{equation}\label{eqn:w2}
    \text{W2}(\bu,\bv) = \sqrt{\min_{\pi}\sum_{i=1}^N \sum_{j=1}^M \pi_{ij} \| \bu_i - \bv_j \|_2^2},
    \end{equation}
    where $\pi_{ij}$ is the optimal transport plan that satisfies the constraints:
    \[
    \sum_{j=1}^M \pi_{ij} = \frac{1}{N}, ~ \forall i; \quad \text{and} \quad \sum_{i=1}^N \pi_{ij} = \frac{1}{M}, ~ \forall j.
    \]
    In practice, we use the POT package \cite{flamary2021pot} for computation.
   
\end{itemize}

\section{Proofs}\label{sec:proof}
In this section, we provide proofs of Proposition~\ref{prop:1} and Proposition~\ref{prop:2}. Let us start with two lemmas.

\begin{lemma}\label{lemma:1}
Let $p(x, t)$ and $q(x,t)$ be two probability density functions on $(x, t) \in \Omega \times [0, T]$, where $\Omega \subseteq \mathbb{R}^d$ is an open set. Assume $p(x, t)$ and $q(x,t)$ are smooth and fast decaying, that is
\begin{itemize}
    \item $p(\cdot, t),q(\cdot, t) \in C^1(\Omega)$ for all $t \in [0, 1]$,
    \item $\lim_{\|x\| \rightarrow \infty} p(x, t) = 0$, and $\lim_{\|x\| \rightarrow \infty} q(x, t) = 0$,
\end{itemize}
and they both satisfy the same continuity equation:
\begin{equation}
    \begin{cases}
        \frac{\partial p(x,t)}{\partial t} + \nabla \cdot (p(x,t)v(x,t)) = 0, \\
        p(x, 0) = p_0(x)
    \end{cases}
\end{equation}
and 
\begin{equation}
    \begin{cases}
        \frac{\partial q(x,t)}{\partial t} + \nabla \cdot (q(x,t)v(x,t)) = 0, \\
        q(x, 0) = q_0(x)
    \end{cases}
\end{equation}
where $v(x, t)$ is the velocity field. Then we have
\[
\frac{\rd }{\rd t} D_{KL}(p(x, t) \| q(x, t)) = 0.
\]
\end{lemma}
\begin{proof}
    \begin{align}
        \frac{\rd }{\rd t} D_{KL}(p(x, t) \| q(x, t)) & = \frac{\rd }{\rd t} \int p(x, t) \ln \frac{p(x, t)}{q(x, t)} \rd x \\
        & = \int \partialt p(x, t)\ln \frac{p(x, t)}{q(x, t)} \rd x + \int \partialt p(x, t) \rd x - \int \partialt q(x, t) \frac{p(x, t)}{q(x, t)} \rd x \\
       & = -\int \nabla \cdot (p(x, t) v(x, t)) \ln \frac{p(x, t)}{q(x, t)} \rd x + 0 + \int \nabla \cdot (q(x, t) v(x, t)) \frac{p(x, t)}{q(x, t)} \rd x \\
       & = \int p(x, t) v(x, t) \nabla \left( \ln \frac{p(x, t)}{q(x, t)} \right) \rd x - \int q(x, t) v(x, t) \nabla \left( \frac{p(x, t)}{q(x, t)} \right) \rd x \\
       & =  \int p(x, t) v(x, t) \nabla \left( \ln \frac{p(x, t)}{q(x, t)} \right) \rd x - \int q(x, t) v(x, t) \frac{p(x, t)}{q(x, t)} \nabla \left( \ln \frac{p(x, t)}{q(x, t)} \right) \rd x \\
       & = 0.
    \end{align}
\end{proof}

\begin{lemma}\label{lemma:2}
Let $p(x, t)$ and $q(x,t)$ be two probability density functions on $(x, t) \in \Omega \times [0, T]$, where $\Omega \subseteq \mathbb{R}^d$ is an open set. Assume $p(x, t)$ and $q(x,t)$ are smooth and fast decaying, that is
\begin{itemize}
    \item $p(\cdot, t),q(\cdot, t) \in C^1(\Omega)$ for all $t \in [0, 1]$,
    \item $\lim_{\|x\| \rightarrow \infty} p(x, t) = 0$, and $\lim_{\|x\| \rightarrow \infty} q(x, t) = 0$,
\end{itemize}
and they both satisfy the same continuity equation:
\begin{equation}
    \begin{cases}
        \frac{\partial p(x,t)}{\partial t} + \nabla \cdot (p(x,t)v(x,t)) = 0, \\
        p(x, 0) = p_0(x)
    \end{cases}
\end{equation}
and 
\begin{equation}
    \begin{cases}
        \frac{\partial q(x,t)}{\partial t} + \nabla \cdot (q(x,t)v(x,t)) = 0, \\
        q(x, 0) = q_0(x)
    \end{cases}
\end{equation}
where $v(x, t)$ is the velocity field, which is $L-$Lipchitz in $x$, we then have 
\[
\mW(p(x, t), q(x, t)) \leq e^{Lt} \mW(p_0(x), q_0(x)).
\]
\end{lemma}
\begin{proof}
    Define a flow map 
    \[
    \Phi_t(x) = X(x, t),
    \]
    where 
    \begin{equation}
        \begin{cases}
            \difft X(x, t) = v(X(x, t), t), \\
            X(x, 0) = x
        \end{cases}
    \end{equation}
    Since the velocity field $v(x, t)$ is $L$-Lipschitz in $x$, by the Gr\"onwall’s inequality, we know that the flow map $\Phi_t$ is $e^{Lt}$-Lipschitz (see \cite{santambrogio2015optimal}), we then have 
    \[
    \mW(p(x, t), q(x, t)) = \mW({\Phi_t}_{\#}p_0(x), {\Phi_t}_{\#}q_0(x)) \leq e^{Lt} \mW(p_0(x), q_0(x)).
    \]
\end{proof}

\begin{proposition}
For any $t_1, t_2 \in (0, 1]$, the KL divergence between the translated distribution $p(\hat{u}(t_1, t_2))$ and the target distribution $p(\tu)$ equals the KL divergence between two perturbed distribution $p(\uul(t_1))$ and $p(\utu(t_2))$, that is
\[
D_{KL}(p(\hat{u}(t_1, t_2)) \| p(\tu)) = D_{KL}(p(\uul(t_1))\|p(\utu(t_2))).
\]
\end{proposition}
\begin{proof}
    The PF ODE
    \begin{equation}
        \begin{cases}
        \frac{ \rd \bx}{\rd t} = - \frac{1}{2} \beta(t) \bx(t) - \frac{1}{2} \beta(t) \tilde{S}^l_\zeta (\bx(t), t), \\
        \bx(0) = \bx_0 \sim p_0(\bx).
    \end{cases}
    \end{equation}
    corresponds to the following continuity equation \cite{santambrogio2015optimal}: 
    \begin{equation}\label{eqn:cont}
        \begin{cases}
        \frac{\partial p(\bx, t)}{\partial t} + \nabla \cdot (p(\bx, t) v(\bx, t)) = 0, \\
        p(\bx, 0) = p_0(\bx).
    \end{cases}
    \end{equation}
    where the velocity field is $v(\bx, t) = \frac{1}{2} \beta(t) \bx(t) - \frac{1}{2} \beta(t) \tilde{S}^l_\zeta (\bx(t), t)$. The distribution $p(\uul(t_1))$ can be obtained by running the continuity equation \eqref{eqn:cont} with initial condition $p_0(\bx) = p(\hat{u}(t_1, t_2))$ for $t_2$. Similarly the distribution $p(\utu(t_2))$ can be obtained by running the continuity equation \eqref{eqn:cont} with initial condition $p_0(\bx) = p(\tu)$ for $t_2$. By applying the Lemma~\ref{lemma:1}, we have
    \[
    D_{KL}(p(\hat{u}(t_1, t_2)) \| p(\tu)) = D_{KL}(p(\uul(t_1))\|p(\utu(t_2))).
    \]
\end{proof}

\begin{proposition}
Assume $\mTt_\zeta(\tu(t), t)$ is $L_s$-Lipchitz continuous in $\tu(t)$, then for any $t_1, t_2 \in (0, 1]$, the $\mW$ distance between the translated distribution $p(\hat{u}(t_1, t_2))$ and the target distribution $p(\tu)$ is upper bounded by the $\mW$ distance between two perturbed distribution $p(\uul(t_1))$ and $p(\utu(t_2))$
\[
\mW(p(\hat{u}(t_1, t_2)), p(\tu)) \leq e^{L_s  t_2} \mW(p(\uul(t_1)), p(\utu(t_2))).
\]
\end{proposition}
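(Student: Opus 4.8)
The plan is to mirror the proof of Proposition~\ref{prop:1} almost verbatim, but to invoke Lemma~\ref{lemma:2} (the Wasserstein estimate) in place of Lemma~\ref{lemma:1} (the KL invariance). First I would set up the PF ODE and its associated continuity equation \eqref{eqn:cont}, now taking the velocity field to be $v(\bx,t) = \mTt_\zeta(\bx,t)$, which is $L_s$-Lipschitz in $\bx$ by hypothesis. Exactly as in the proof of Proposition~\ref{prop:1}, I would then record the two flow identities: running \eqref{eqn:cont} for time $t_2$ from the initial density $p(\hat{u}(t_1,t_2))$ produces $p(\uul(t_1))$, and running it for the same time $t_2$ from $p(\tu)$ produces $p(\utu(t_2))$. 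Thus both target densities are pushforwards of the two initial densities under one common time-$t_2$ flow map.

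The one genuine subtlety — and the step I expect to be the main obstacle — is the direction of the inequality. A direct application of Lemma~\ref{lemma:2} to these two flows yields $\mW(p(\uul(t_1)), p(\utu(t_2))) \le e^{L_s t_2}\,\mW(p(\hat{u}(t_1,t_2)), p(\tu))$, which is the \emph{reverse} of the asserted bound. To recover the stated direction I would instead run the flow backward in time. Concretely, I set $\tilde{p}(\bx,s) := p(\bx, t_2 - s)$ and $\tilde{q}(\bx,s) := q(\bx, t_2 - s)$ for the two flows above; these solve the continuity equation with the time-reversed velocity field $\tilde{v}(\bx,s) = -v(\bx, t_2 - s)$, whose spatial Lipschitz constant is still $L_s$, since negation and a time shift leave the spatial Lipschitz seminorm unchanged. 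Their initial data at $s=0$ are $p(\uul(t_1))$ and $p(\utu(t_2))$, and at $s=t_2$ they equal $p(\hat{u}(t_1,t_2))$ and $p(\tu)$ respectively.

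Applying Lemma~\ref{lemma:2} to the pair $\tilde{p},\tilde{q}$ over the interval $[0,t_2]$ then gives exactly
\[
\mW(p(\hat{u}(t_1,t_2)), p(\tu)) \le e^{L_s t_2}\,\mW(p(\uul(t_1)), p(\utu(t_2))),
\]
which is the claim. Equivalently, one may phrase the argument through the inverse flow map $\Phi_{t_2}^{-1}$: because $\Phi_{t_2}^{-1}$ is the flow of $-v$ over time $t_2$, the same Gr\"onwall estimate used in Lemma~\ref{lemma:2} shows it is $e^{L_s t_2}$-Lipschitz, and pushing $p(\uul(t_1)),p(\utu(t_2))$ through it recovers $p(\hat{u}(t_1,t_2)),p(\tu)$, so the Lipschitz-pushforward bound on $\mW$ closes the argument. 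The only points needing care are the two already noted: that the backward field retains the constant $L_s$ (immediate) and that the backward flow is well defined on $[0,t_2]$ (guaranteed by the same regularity hypotheses underlying Lemma~\ref{lemma:2}); everything else is a direct transcription of the Proposition~\ref{prop:1} argument.
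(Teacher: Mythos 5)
Your proposal is correct, and its skeleton is the same as the paper's: identify the PF ODE with the continuity equation for the velocity field $v(\bx,t)=\mTt_\zeta(\bx,t)$, note that the time-$t_2$ flow carries $p(\hat{u}(t_1,t_2))$ to $p(\uul(t_1))$ and $p(\tu)$ to $p(\utu(t_2))$, and invoke Lemma~\ref{lemma:2}. Where you go beyond the paper is exactly the point you flag as the main obstacle. The paper's proof applies Lemma~\ref{lemma:2} directly to the forward flow with initial data $p(\hat{u}(t_1,t_2))$ and $p(\tu)$, which, read literally, yields
\[
\mW\bigl(p(\uul(t_1)), p(\utu(t_2))\bigr) \le e^{L_s t_2}\, \mW\bigl(p(\hat{u}(t_1,t_2)), p(\tu)\bigr),
\]
i.e.\ the reverse of the stated inequality, and it offers no comment on how to turn this around. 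Your time-reversal step supplies precisely the missing justification: the reversed densities $\tilde{p}(\bx,s)=p(\bx,t_2-s)$ solve the continuity equation with velocity $-v(\bx,t_2-s)$, whose spatial Lipschitz constant is still $L_s$, so the inverse flow map $\Phi_{t_2}^{-1}$ is $e^{L_s t_2}$-Lipschitz by the same Gr\"onwall argument, and pushing $p(\uul(t_1))$ and $p(\utu(t_2))$ through it gives the claimed bound. Both inequalities in fact hold with the same constant, so the proposition is true, but your argument is the one that actually proves the stated direction; it is not merely a transcription of the paper's proof but a repair of a genuine imprecision in it. I see no gaps on your side.
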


\begin{proof}
    The PF ODE
    \begin{equation}
        \begin{cases}
        \frac{ \rd \bx}{\rd t} = \mTt_\zeta(\bx(t), t), \\
        \bx(0) = \bx_0 \sim p_0(\bx).
    \end{cases}
    \end{equation}
    corresponds to the following continuity equation (see \cite{santambrogio2015optimal}): 
    \begin{equation}
        \begin{cases}
        \frac{\partial p(\bx, t)}{\partial t} + \nabla \cdot (p(\bx, t) v(\bx, t)) = 0, \\
        p(\bx, 0) = p_0(\bx).
    \end{cases}
    \end{equation} 
    where the velocity field is $v(\bx, t) = \mTt_\zeta(\bx, t)$ is $L_s$-Lipschitz. The distribution $p(\uul(t_1))$ can be obtained by running the continuity equation \eqref{eqn:cont} with initial condition $p_0(\bx) = p(\hat{u}(t_1, t_2))$ for $t_2$. Similarly the distribution $p(\utu(t_2))$ can be obtained by running the continuity equation \eqref{eqn:cont} with initial condition $p_0(\bx) = p(\tu)$ for $t_2$. By applying the Lemma~\ref{lemma:2}, we have
    \[
\mW(p(\hat{u}(t_1, t_2)), p(\tu)) \leq e^{L_s  t_2} \mW(p(\uul(t_1)), p(\utu(t_2))).
\]
\end{proof}

\bibliographystyle{siamplain}
\bibliography{references}